\documentclass[10pt,reqno]{amsart}
\usepackage[hypertex]{hyperref}

\setlength{\textheight}{9in}
\setlength{\topmargin}{-.2in}
\setlength{\textwidth}{6in}
\setlength{\oddsidemargin}{.26in}
\setlength{\evensidemargin}{.26in}

\usepackage{longtable}
\usepackage{amsmath,amsthm}
\usepackage{amssymb}
\usepackage{euscript}

\numberwithin{equation}{subsection}

\newcommand{\sqsp}{\renewcommand{\baselinestretch}{1.1}\tiny\normalsize}

\raggedbottom
\tolerance=9000
\hbadness=10000
\hfuzz=1.5pt
\setcounter{secnumdepth}{3}
\setcounter{tocdepth}{2}




\newtheorem{theorem}[subsection]{Theorem}
\newtheorem{lemma}[subsection]{Lemma}

\newtheorem{corollary}[subsection]{Corollary}


\theoremstyle{definition}

\newtheorem{example}[subsection]{Example}



\newcommand{\bk}{\mathbf{k}}
\newcommand{\bC}{\mathbf{C}}


\def\Tone{{\begin{pmatrix}0 & 0 \\ 0 & 0\end{pmatrix}}}
\def\Tonea{{\begin{pmatrix}a_1 & b_1 \\ a_2 & b_2\end{pmatrix}}}

\def\Ttwo{{\begin{pmatrix}e_2 & 0 \\ 0 & 0\end{pmatrix}}}
\def\Ttwoa{{\begin{pmatrix}a_1 & 0 \\ a_2 & a_1^2\end{pmatrix}}}
\def\TtwoaM{{\begin{pmatrix}a_1^2e_2 & 0 \\ 0 & 0\end{pmatrix}}}

\def\Tthree{{\begin{pmatrix}0 & 0 \\ -e_1 & 0\end{pmatrix}}}
\def\Tthreea{{\begin{pmatrix}0 & b_1 \\ 0 & b_2\end{pmatrix}}}

\def\Tthreeb{{\begin{pmatrix}a & 0 \\ 0 & 1\end{pmatrix}}}
\def\TthreebM{{\begin{pmatrix}0 & 0 \\ -ae_1 & 0\end{pmatrix}}}

\def\None{{\begin{pmatrix}e_1 & 0 \\ 0 & e_2\end{pmatrix}}}
\def\Nonea{{\begin{pmatrix}a_1 & b_1 \\ a_2 & b_2\end{pmatrix}}}
\def\NoneaM{{\begin{pmatrix}a_1e_1+a_2e_2 & 0 \\ 0 & b_1e_1+b_2e_2\end{pmatrix}}}

\def\Ntwo{{\begin{pmatrix}e_1 & 0 \\ 0 & 0\end{pmatrix}}}
\def\Ntwoa{{\begin{pmatrix}a & 0 \\ 0 & b\end{pmatrix}}}
\def\NtwoaM{{\begin{pmatrix}ae_1 & 0 \\ 0 & 0\end{pmatrix}}}

\def\Nthree{{\begin{pmatrix}e_1 & e_2 \\ e_2 & 0\end{pmatrix}}}
\def\Nthreea{{\begin{pmatrix}1 & 0 \\ 0 & b\end{pmatrix}}}
\def\NthreeaM{{\begin{pmatrix}e_1 & be_2 \\ be_2 & 0\end{pmatrix}}}

\def\Nfour{{\begin{pmatrix}0 & e_1 \\ 0 & e_2\end{pmatrix}}}
\def\Nfoura{{\begin{pmatrix}a & b \\ 0 & 1\end{pmatrix}}}
\def\NfouraM{{\begin{pmatrix}0 & ae_1 \\ 0 & be_1+e_2\end{pmatrix}}}

\def\Nfive{{\begin{pmatrix}0 & e_1 \\ 0 & e_1+e_2\end{pmatrix}}}
\def\Nfivea{{\begin{pmatrix}1 & b \\ 0 & 1\end{pmatrix}}}
\def\NfiveaM{{\begin{pmatrix}0 & e_1 \\ 0 & (1+b)e_1+e_2\end{pmatrix}}}

\def\Nsix{{\begin{pmatrix}0 & e_1 \\ \lambda e_1 & e_2\end{pmatrix}}}
\def\Nsixa{{\begin{pmatrix}a & 0 \\ 0 & 1\end{pmatrix}}}
\def\NsixaM{{\begin{pmatrix}0 & ae_1 \\ \lambda ae_1 & e_2\end{pmatrix}}}



\def\Aonea{{\begin{pmatrix} a_1 & a_2 & a_3 \\ b_1 & b_2 & b_3 \\ c_1 & c_2 & c_3 \end{pmatrix}}}

\def\Atwo{{\begin{pmatrix} 0 & 0 & 0 \\ 0 & 0 & 0 \\ 0 & 0 & e_1 \end{pmatrix}}}
\def\Atwoa{{\begin{pmatrix} c_3^2 & b_1 & c_1 \\ 0 & b_2 & c_2 \\ 0 & 0 & c_3 \end{pmatrix}}}
\def\AtwoaM{{\begin{pmatrix} 0 & 0 & 0 \\ 0 & 0 & 0 \\ 0 & 0 & c_3^2e_1 \end{pmatrix}}}

\def\Athree{{\begin{pmatrix} 0 & 0 & 0 \\ 0 & e_1 & 0 \\ 0 & 0 & e_1 \end{pmatrix}}}
\def\Athreea{{\begin{pmatrix} 0 & b & c_1 \\ 0 & 0 & c_2 \\ 0 & 0 & \pm ic_2 \end{pmatrix}}}

\def\Athreeb{{\begin{pmatrix} b_3^2 & b_1 & c \\ 0 & 0 & \pm b_3 \\ 0 & b_3 & 0 \end{pmatrix}}}
\def\AthreebM{{\begin{pmatrix} 0 & 0 & 0 \\ 0 & b_3^2e_1 & 0 \\ 0 & 0 & b_3^2e_1 \end{pmatrix}}}

\def\Athreec{{\begin{pmatrix} b_2^2 & b_1 & c \\ 0 & b_2 & 0 \\ 0 & 0 & \pm b_2 \end{pmatrix}}}
\def\AthreecM{{\begin{pmatrix} 0 & 0 & 0 \\ 0 & b_2^2e_1 & 0 \\ 0 & 0 & b_2^2e_1 \end{pmatrix}}}

\def\Athreed{{\begin{pmatrix} 0 & b_1 & c \\ 0 & b_2 & 0 \\ 0 & \pm ib_2 & 0 \end{pmatrix}}}

\def\Afour{{\begin{pmatrix} 0 & 0 & 0 \\ 0 & 0 & e_1 \\ 0 & e_1 & e_2 \end{pmatrix}}}
\def\Afoura{{\begin{pmatrix} c_3^3 & 2c_2c_3 & c_1 \\ 0 & c_3^2 & c_2 \\ 0 & 0 & c_3 \end{pmatrix}}}
\def\AfouraM{{\begin{pmatrix} 0 & 0 & 0 \\ 0 & 0 & c_3^3e_1 \\ 0 & c_3^3e_1 & 2c_2c_3e_1 + c_3^2e_2 \end{pmatrix}}}

\def\Afive{{\begin{pmatrix} 0 & 0 & 0 \\ 0 & 0 & e_1 \\ 0 & -e_1 & 0 \end{pmatrix}}}
\def\Afivea{{\begin{pmatrix} b_2c_3 - b_3c_2 & b_1 & c_1 \\ 0 & b_2 & c_2 \\ 0 & b_3 & c_3 \end{pmatrix}}}
\def\AfiveaM{{\begin{pmatrix} 0 & 0 & 0 \\ 0 & 0 & (b_2c_3 - b_3c_2)e_1 \\ 0 & (b_3c_2 - b_2c_3)e_1 & 0 \end{pmatrix}}}

\def\Asix{{\begin{pmatrix} 0 & 0 & 0 \\ 0 & e_1 & e_1 \\ 0 & -e_1 & \lambda e_1 \end{pmatrix}}}
\def\Asixa{{\begin{pmatrix} 0 & b_1 & c_1 \\ 0 & 0 & 0 \\ 0 & b_3 & c_3 \end{pmatrix}}}

\def\Asixb{{\begin{pmatrix} b_2^2 & b_1 & c \\ 0 & b_2 & 0 \\ 0 & b_3 & b_2 \end{pmatrix}}}
\def\AsixbM{{\begin{pmatrix} 0 & 0 & 0 \\ 0 & b_2^2e_1 & b_2^2e_1 \\ 0 & -b_2^2e_1 & 0 \end{pmatrix}}}

\def\Asixc{{\begin{pmatrix} 0 & b & c_1 \\ 0 & 0 & \pm i\sqrt{\lambda}c_3 \\ 0 & 0 & c_3\end{pmatrix}}}

\def\Asixd{{\begin{pmatrix} \lambda b_3^2 & b_1 & c \\ 0 & 0 & -\lambda b_3 \\ 0 & b_3 & 0 \end{pmatrix}}}
\def\AsixdM{{\begin{pmatrix} 0 & 0 & 0 \\ 0 & \lambda b_3^2e_1 & \lambda b_3^2e_1 \\ 0 & -\lambda b_3^2e_1 & \lambda^2 b_3^2e_1 \end{pmatrix}}}

\def\Asixe{{\begin{pmatrix} b_2^2 & b_1 & c \\ 0 & b_2 & 0 \\ 0 & 0 & b_2 \end{pmatrix}}}
\def\AsixeM{{\begin{pmatrix} 0 & 0 & 0 \\ 0 & b_2^2e_1 & b_2^2e_1 \\ 0 & -b_2^2e_1 & \lambda b_2^2e_1 \end{pmatrix}}}

\def\Asixf{{\begin{pmatrix} 0 & b_1 & c \\ 0 & \pm i \sqrt{\lambda}b_3 & 0 \\ 0 & b_3 & 0 \end{pmatrix}}}

\def\Aseven{{\begin{pmatrix} 0 & 0 & 0 \\ 0 & 0 & e_1 \\ 0 & \lambda e_1 & e_2 \end{pmatrix}}}
\def\Asevena{{\begin{pmatrix} c_3^3 & (1+\lambda)c_2c_3 & c_1 \\ 0 & c_3^2 & c_2 \\ 0 & 0 & c_3 \end{pmatrix}}}
\def\AsevenaM{{\begin{pmatrix} 0 & 0 & 0 \\ 0 & 0 & c_3^3e_1 \\ 0 & \lambda c_3^3e_1 & (1+\lambda)c_2c_3e_1 + c_3^2e_2  \end{pmatrix}}}

\def\Aeight{{\begin{pmatrix} 0 & 0 & 0 \\ 0 & 0 & 0 \\ 0 & e_1 & e_2 \end{pmatrix}}}
\def\Aeighta{{\begin{pmatrix} c_3^3 & c_2c_3 & c_1 \\ 0 & c_3^2 & c_2 \\ 0 & 0 & c_3 \end{pmatrix}}}
\def\AeightaM{{\begin{pmatrix} 0 & 0 & 0 \\ 0 & 0 & 0 \\ 0 & c_3^3e_1 & c_2c_3e_1 + c_3^2e_2 \end{pmatrix}}}

\def\Anine{{\begin{pmatrix} 0 & 0 & 0 \\ 0 & 0 & 0 \\ 0 & e_2 & 0 \end{pmatrix}}}
\def\Aninea{{\begin{pmatrix} a_1 & 0 & c_1 \\ 0 & 0 & 0 \\ a_3 & 0 & c_3 \end{pmatrix}}}

\def\Anineb{{\begin{pmatrix} a_1 & 0 & c_1 \\ a_2 & 0 & c_2 \\ 0 & 0 & 0 \end{pmatrix}}}

\def\Aninec{{\begin{pmatrix} a & 0 & c \\ 0 & b & 0 \\ 0 & 0 & 1 \end{pmatrix}}}
\def\AninecM{{\begin{pmatrix} 0 & 0 & 0 \\ 0 & 0 & 0 \\ 0 & be_2 & 0 \end{pmatrix}}}

\def\Aten{{\begin{pmatrix} 0 & 0 & 0 \\ 0 & 0 & 0 \\ 0 & e_2 & e_1 \end{pmatrix}}}
\def\Atena{{\begin{pmatrix} 1 & 0 & c \\ 0 & b & 0 \\ 0 & 0 & 1 \end{pmatrix}}}
\def\AtenaM{{\begin{pmatrix} 0 & 0 & 0 \\ 0 & 0 & 0 \\ 0 & be_2 & e_1 \end{pmatrix}}}

\def\Atenb{{\begin{pmatrix} 0 & 0 & c_1 \\ 0 & 0 & c_2 \\ 0 & 0 & 0 \end{pmatrix}}}

\def\Atenc{{\begin{pmatrix} c_3^2 & 0 & c_1 \\ 0 & 0 & 0 \\ 0 & 0 & c_3 \end{pmatrix}}}
\def\AtencM{{\begin{pmatrix} 0 & 0 & 0 \\ 0 & 0 & 0 \\ 0 & 0 & c_3^2e_1 \end{pmatrix}}}

\def\Aeleven{{\begin{pmatrix} 0 & 0 & 0 \\ 0 & 0 & 0 \\ e_1 & \lambda e_2 & 0 \end{pmatrix}}}
\def\Aelevena{{\begin{pmatrix} 0 & 0 & c_1 \\ 0 & 0 & c_2 \\ 0 & 0 & 0 \end{pmatrix}}}

\def\Aelevenb{{\begin{pmatrix} 0 & 0 & 0 \\ 0 & 0 & 0 \\ 0 & 0 & c \end{pmatrix}}}

\def\Aelevenc{{\begin{pmatrix} 0 & 0 & 0 \\ a & 0 & 0 \\ 0 & 0 & \lambda^{-1} \end{pmatrix}}}
\def\AelevencM{{\begin{pmatrix} 0 & 0 & 0 \\ 0 & 0 & 0 \\ ae_2 & 0 & 0 \end{pmatrix}}}

\def\Aelevend{{\begin{pmatrix} 0 & b & 0 \\ 0 & 0 & 0 \\ 0 & 0 & \lambda \end{pmatrix}}}
\def\AelevendM{{\begin{pmatrix} 0 & 0 & 0 \\ 0 & 0 & 0 \\ 0 & \lambda be_1 & 0 \end{pmatrix}}}

\def\Aelevene{{\begin{pmatrix} 0 & b & 0 \\ a & 0 & 0 \\ 0 & 0 & -1 \end{pmatrix}}}
\def\AeleveneM{{\begin{pmatrix} 0 & 0 & 0 \\ 0 & 0 & 0 \\ ae_2 & -be_1 & 0 \end{pmatrix}}}

\def\Aelevenf{{\begin{pmatrix} a_1 & b_1 & 0 \\ a_2 & b_2 & 0 \\ 0 & 0 & 1 \end{pmatrix}}}
\def\AelevenfM{{\begin{pmatrix} 0 & 0 & 0 \\ 0 & 0 & 0 \\ a_1e_1 + a_2e_2 &
 b_1e_1 + b_2e_2 & 0 \end{pmatrix}}}

\def\Aeleveng{{\begin{pmatrix} a & 0 & 0 \\ 0 & b & 0 \\ 0 & 0 & 1 \end{pmatrix}}}
\def\AelevengM{{\begin{pmatrix} 0 & 0 & 0 \\ 0 & 0 & 0 \\ ae_1 & \lambda be_2 & 0 \end{pmatrix}}}

\def\Atwelve{{\begin{pmatrix} 0 & 0 & 0 \\ 0 & 0 & 0 \\ e_1 & e_1+e_2 & 0 \end{pmatrix}}}
\def\Atwelvea{{\begin{pmatrix} 0 & 0 & c_1 \\ 0 & 0 & c_2 \\ 0 & 0 & c_3 \end{pmatrix}}}

\def\Atwelvec{{\begin{pmatrix} a & b & 0 \\ 0 & a & 0 \\ 0 & 0 & 1 \end{pmatrix}}}
\def\AtwelvecM{{\begin{pmatrix} 0 & 0 & 0 \\ 0 & 0 & 0 \\ ae_1 & (a+b)e_1+ae_2 & 0 \end{pmatrix}}}

\def\Athirteen{{\begin{pmatrix} 0 & 0 & 0 \\ 0 & e_1 & 0 \\ e_1 & \frac{1}{2}e_2 & 0 \end{pmatrix}}}
\def\Athirteena{{\begin{pmatrix} 0 & 0 & c_1 \\ 0 & 0 & 0 \\ 0 & 0 & c_3 \end{pmatrix}}}

\def\Athirteenb{{\begin{pmatrix} 0 & b & 0 \\ 0 & 0 & 0 \\ 0 & 0 & \frac{1}{2} \end{pmatrix}}}
\def\AthirteenbM{{\begin{pmatrix} 0 & 0 & 0 \\ 0 & 0 & 0 \\ 0 & \frac{1}{2}be_1 & 0 \end{pmatrix}}}

\def\Athirteenc{{\begin{pmatrix} b^2 & 0 & 0 \\ 0 & b & 0 \\ 0 & 0 & 1 \end{pmatrix}}}
\def\AthirteencM{{\begin{pmatrix} 0 & 0 & 0 \\ 0 & b^2e_1 & 0 \\ b^2e_1 & \frac{1}{2}be_2 & 0 \end{pmatrix}}}



\def\Bzero{{\begin{pmatrix} e_1 & 0 & 0 \\ 0 & e_2 & 0 \\ 0 & 0 & e_3 \end{pmatrix}}}
\def\Bzeroa{{\begin{pmatrix} a_1 & b_1 & c_1 \\ a_2 & b_2 & c_2 \\ a_3 & b_3 & c_3 \end{pmatrix}}}
\def\BzeroaM{{\begin{pmatrix} \sum_{i=1}^3 a_ie_i & 0 & 0 \\ 0 & \sum_{i=1}^3 b_ie_i & 0 \\ 0 & 0 & \sum_{i=1}^3 c_ie_i \end{pmatrix}}}

\def\Bone{{\begin{pmatrix} 0 & 0 & 0 \\ 0 & e_2 & 0 \\ 0 & 0 & e_3 \end{pmatrix}}}
\def\Bonea{{\begin{pmatrix} a & 0 & 0 \\ 0 & 0 & c_2 \\ 0 & 0 & c_3 \end{pmatrix}}}
\def\BoneaM{{\begin{pmatrix} 0 & 0 & 0 \\ 0 & 0 & 0 \\ 0 & 0 & c_2e_2 + c_3e_3 \end{pmatrix}}}

\def\Boneb{{\begin{pmatrix} a & 0 & 0 \\ 0 & b_2 & 0 \\ 0 & b_3 & 0 \end{pmatrix}}}
\def\BonebM{{\begin{pmatrix} 0 & 0 & 0 \\ 0 & b_2e_2 + b_3e_3 & 0 \\ 0 & 0 & 0 \end{pmatrix}}}

\def\Bonec{{\begin{pmatrix} a & 0 & 0 \\ 0 & 0 & c \\ 0 & b & 0 \end{pmatrix}}}
\def\BonecM{{\begin{pmatrix} 0 & 0 & 0 \\ 0 & be_3 & 0 \\ 0 & 0 & ce_2 \end{pmatrix}}}

\def\Boned{{\begin{pmatrix} a & 0 & 0 \\ 0 & b & 0 \\ 0 & 0 & c \end{pmatrix}}}
\def\BonedM{{\begin{pmatrix} 0 & 0 & 0 \\ 0 & be_2 & 0 \\ 0 & 0 & ce_3 \end{pmatrix}}}

\def\Btwo{{\begin{pmatrix} 0 & 0 & e_1 \\ 0 & e_2 & 0 \\ e_1 & 0 & e_3 \end{pmatrix}}}
\def\Btwoa{{\begin{pmatrix} 0 & 0 & 0 \\ 0 & b_2 & c \\ 0 & b_3 & 0 \end{pmatrix}}}
\def\BtwoaM{{\begin{pmatrix} 0 & 0 & 0 \\ 0 & b_2e_2+b_3e_3 & 0 \\ 0 & 0 & ce_2 \end{pmatrix}}}

\def\Btwob{{\begin{pmatrix} a & 0 & 0 \\ 0 & b & c \\ 0 & 0 & 1 \end{pmatrix}}}
\def\BtwobM{{\begin{pmatrix} 0 & 0 & ae_1 \\ 0 & be_2 & 0 \\ ae_1 & 0 & ce_2+e_3 \end{pmatrix}}}

\def\Bthree{{\begin{pmatrix} 0 & 0 & e_1 \\ 0 & e_2 & 0 \\ 0 & 0 & e_3 \end{pmatrix}}}
\def\Bthreea{{\begin{pmatrix} 0 & 0 & 0 \\ 0 & b & c \\ 0 & 0 & 0 \end{pmatrix}}}
\def\BthreeaM{{\begin{pmatrix} 0 & 0 & 0 \\ 0 & be_2 & 0 \\ 0 & 0 & ce_2 \end{pmatrix}}}

\def\Bthreeb{{\begin{pmatrix} 0 & b_1 & 0 \\ 0 & b_2 & c \\ 0 & 1 & 0 \end{pmatrix}}}
\def\BthreebM{{\begin{pmatrix} 0 & 0 & 0 \\ 0 & b_1e_1 + b_2e_2 + e_3 & 0 \\ 0 & 0 & ce_2 \end{pmatrix}}}

\def\Bthreec{{\begin{pmatrix} a & 0 & c_1 \\ 0 & b & c_2 \\ 0 & 0 & 1 \end{pmatrix}}}
\def\BthreecM{{\begin{pmatrix} 0 & 0 & ae_1 \\ 0 & be_2 & 0 \\ 0 & 0 & c_1e_1 + c_2e_2 + e_3 \end{pmatrix}}}

\def\Bfour{{\begin{pmatrix} 0 & 0 & e_1 \\ 0 & e_2 & 0 \\ 0 & 0 & e_1+e_3 \end{pmatrix}}}
\def\Bfoura{{\begin{pmatrix} 0 & 0 & 0 \\ 0 & b & c \\ 0 & 0 & 0 \end{pmatrix}}}
\def\BfouraM{{\begin{pmatrix} 0 & 0 & 0 \\ 0 & be_2 & 0 \\ 0 & 0 & ce_2 \end{pmatrix}}}

\def\Bfourb{{\begin{pmatrix} 1 & 0 & c_1 \\ 0 & b & c_2 \\ 0 & 0 & 1 \end{pmatrix}}}
\def\BfourbM{{\begin{pmatrix} 0 & 0 & e_1 \\ 0 & be_2 & 0 \\ 0 & 0 & (1+c_1)e_1+c_2e_2+e_3 \end{pmatrix}}}

\def\Bfive{{\begin{pmatrix} 0 & 0 & e_1 \\ 0 & e_2 & 0 \\ \lambda e_1 & 0 & e_3 \end{pmatrix}}}
\def\Bfivea{{\begin{pmatrix} 0 & 0 & 0 \\ 0 & 1 & 0 \\ 0 & 0 & 0 \end{pmatrix}}}
\def\BfiveaM{{\begin{pmatrix} 0 & 0 & 0 \\ 0 & e_2 & 0 \\ 0 & 0 & 0 \end{pmatrix}}}

\def\Bfiveb{{\begin{pmatrix} 0 & 0 & 0 \\ 0 & 0 & 1 \\ 0 & 0 & 0 \end{pmatrix}}}
\def\BfivebM{{\begin{pmatrix} 0 & 0 & 0 \\ 0 & 0 & 0 \\ 0 & 0 & e_2 \end{pmatrix}}}

\def\Bfivec{{\begin{pmatrix} 0 & 0 & 0 \\ 0 & 0 & 0 \\ 0 & 1 & 0 \end{pmatrix}}}
\def\BfivecM{{\begin{pmatrix} 0 & 0 & 0 \\ 0 & e_3 & 0 \\ 0 & 0 & 0 \end{pmatrix}}}

\def\Bfived{{\begin{pmatrix} a & 0 & 0 \\ 0 & 0 & 0 \\ 0 & 0 & 1 \end{pmatrix}}}
\def\BfivedM{{\begin{pmatrix} 0 & 0 & ae_1 \\ 0 & 0 & 0 \\ \lambda ae_1 & 0 & e_3 \end{pmatrix}}}

\def\Bfivee{{\begin{pmatrix} 0 & 0 & 0 \\ 0 & 1 & 0 \\ 0 & 1 & 0 \end{pmatrix}}}
\def\BfiveeM{{\begin{pmatrix} 0 & 0 & 0 \\ 0 & e_2+e_3 & 0 \\ 0 & 0 & 0 \end{pmatrix}}}

\def\Bfivef{{\begin{pmatrix} a & 0 & 0 \\ 0 & 1 & 0 \\ 0 & 0 & 1 \end{pmatrix}}}
\def\BfivefM{{\begin{pmatrix} 0 & 0 & ae_1 \\ 0 & e_2 & 0 \\ \lambda ae_1 & 0 & e_3 \end{pmatrix}}}

\def\Bfiveg{{\begin{pmatrix} 0 & 0 & 0 \\ 0 & 0 & 1 \\ 0 & 1 & 0 \end{pmatrix}}}
\def\BfivegM{{\begin{pmatrix} 0 & 0 & 0 \\ 0 & e_3 & 0 \\ 0 & 0 & e_2 \end{pmatrix}}}

\def\Bfiveh{{\begin{pmatrix} a & 0 & 0 \\ 0 & 0 & 1 \\ 0 & 0 & 1 \end{pmatrix}}}
\def\BfivehM{{\begin{pmatrix} 0 & 0 & ae_1 \\ 0 & 0 & 0 \\ \lambda ae_1 & 0 & e_2+e_3 \end{pmatrix}}}

\def\Cone{{\begin{pmatrix} 0 & 0 & 0 \\ 0 & 0 & 0 \\ 0 & 0 & e_3 \end{pmatrix}}}
\def\Conea{{\begin{pmatrix} a_1 & b_1 & 0 \\ a_2 & b_2 & 0 \\ 0 & 0 & c \end{pmatrix}}}
\def\ConeaM{{\begin{pmatrix} 0 & 0 & 0 \\ 0 & 0 & 0 \\ 0 & 0 & ce_3 \end{pmatrix}}}

\def\Ctwo{{\begin{pmatrix} 0 & 0 & e_1 \\ 0 & 0 & 0 \\ e_1 & 0 & e_3 \end{pmatrix}}}
\def\Ctwoa{{\begin{pmatrix} 0 & b_1 & 0 \\ 0 & b_2 & 0 \\ 0 & 0 & 0 \end{pmatrix}}}
\def\Ctwob{{\begin{pmatrix} a & 0 & 0 \\ 0 & b & 0 \\ 0 & 0 & 1 \end{pmatrix}}}
\def\CtwobM{{\begin{pmatrix} 0 & 0 & ae_1 \\ 0 & 0 & 0 \\ ae_1 & 0 & e_3 \end{pmatrix}}}

\def\Cthree{{\begin{pmatrix} 0 & 0 & e_1 \\ 0 & 0 & 0 \\ 0 & 0 & e_3 \end{pmatrix}}}
\def\Cthreea{{\begin{pmatrix} 0 & b_1 & 0 \\ 0 & b_2 & 0 \\ 0 & 0 & 0 \end{pmatrix}}}
\def\Cthreeb{{\begin{pmatrix} a & 0 & c \\ 0 & b & 0 \\ 0 & 0 & 1 \end{pmatrix}}}
\def\CthreebM{{\begin{pmatrix} 0 & 0 & ae_1 \\ 0 & 0 & 0 \\ 0 & 0 & ce_1+e_3 \end{pmatrix}}}

\def\Cfour{{\begin{pmatrix} 0 & 0 & e_1 \\ 0 & 0 & 0 \\ 0 & 0 & e_1+e_3 \end{pmatrix}}}
\def\Cfoura{{\begin{pmatrix} 0 & b_1 & 0 \\ 0 & b_2 & 0 \\ 0 & 0 & 0 \end{pmatrix}}}
\def\Cfourb{{\begin{pmatrix} 1 & 0 & c \\ 0 & b & 0 \\ 0 & 0 & 1 \end{pmatrix}}}
\def\CfourbM{{\begin{pmatrix} 0 & 0 & e_1 \\ 0 & 0 & 0 \\ 0 & 0 & (1+c)e_1+e_3 \end{pmatrix}}}

\def\Cfive{{\begin{pmatrix} 0 & 0 & e_1 \\ 0 & 0 & 0 \\ \lambda e_1 & 0 & e_3 \end{pmatrix}}}
\def\Cfivea{{\begin{pmatrix} 0 & b_1 & 0 \\ 0 & b_2 & 0 \\ 0 & 0 & 0 \end{pmatrix}}}
\def\Cfiveb{{\begin{pmatrix} a & 0 & 0 \\ 0 & b & 0 \\ 0 & 0 & 1 \end{pmatrix}}}
\def\CfivebM{{\begin{pmatrix} 0 & 0 & ae_1 \\ 0 & 0 & 0 \\ \lambda ae_1 & 0 & e_3 \end{pmatrix}}}

\def\Csix{{\begin{pmatrix} 0 & 0 & e_1 \\ 0 & 0 & e_2 \\ e_1 & 0 & e_3 \end{pmatrix}}}
\def\Csixa{{\begin{pmatrix} a & 0 & 0 \\ 0 & b & c \\ 0 & 0 & 1 \end{pmatrix}}}
\def\CsixaM{{\begin{pmatrix} 0 & 0 & ae_1 \\ 0 & 0 & be_2 \\ ae_1 & 0 & ce_2+e_3 \end{pmatrix}}}

\def\Cseven{{\begin{pmatrix} 0 & 0 & e_1 \\ 0 & 0 & e_2 \\ e_1 & 0 & e_2+e_3 \end{pmatrix}}}
\def\Csevena{{\begin{pmatrix} a & 0 & 0 \\ 0 & 1 & c \\ 0 & 0 & 1 \end{pmatrix}}}
\def\CsevenaM{{\begin{pmatrix} 0 & 0 & ae_1 \\ 0 & 0 & e_2 \\ ae_1 & 0 & (1+c)e_2+e_3 \end{pmatrix}}}

\def\Ceight{{\begin{pmatrix} 0 & 0 & e_1 \\ 0 & 0 & e_2 \\ 0 & 0 & e_3 \end{pmatrix}}}
\def\Ceighta{{\begin{pmatrix} a_1 & b_1 & c_1 \\ a_2 & b_2 & c_2 \\ 0 & 0 & 1 \end{pmatrix}}}
\def\CeightaM{{\begin{pmatrix} 0 & 0 & a_1e_1+a_2e_2 \\ 0 & 0 & b_1e_1+b_2e_2 \\ 0 & 0 & c_1e_1+c_2e_2+e_3 \end{pmatrix}}}

\def\Cnine{{\begin{pmatrix} 0 & 0 & e_1 \\ 0 & 0 & e_2 \\ \lambda e_1 & 0 & e_3 \end{pmatrix}}}
\def\Cninea{{\begin{pmatrix} a & 0 & 0 \\ 0 & b & c \\ 0 & 0 & 1 \end{pmatrix}}}
\def\CnineaM{{\begin{pmatrix} 0 & 0 & ae_1 \\ 0 & 0 & be_2 \\ \lambda ae_1 & 0 & ce_2+e_3 \end{pmatrix}}}

\def\Cten{{\begin{pmatrix} 0 & 0 & e_1 \\ 0 & 0 & e_2 \\ \lambda e_1 & 0 & e_2+e_3 \end{pmatrix}}}
\def\Ctena{{\begin{pmatrix} a_1 & 0 & c_1 \\ a_2 & 1 & c_2 \\ 0 & 0 & 1 \end{pmatrix}}}
\def\CtenaM{{\begin{pmatrix} 0 & 0 & a_1e_1+a_2e_2 \\ 0 & 0 & e_2 \\ \lambda(a_1e_1+a_2e_2) & 0 & c_1e_1+(1+c_2)e_2+e_3 \end{pmatrix}}}

\def\Celeven{{\begin{pmatrix} 0 & 0 & e_1 \\ 0 & 0 & e_2 \\ e_1 & e_2 & e_3 \end{pmatrix}}}
\def\Celevena{{\begin{pmatrix} a_1 & b_1 & 0 \\ a_2 & b_2 & 0 \\ 0 & 0 & 1 \end{pmatrix}}}
\def\CelevenaM{{\begin{pmatrix} 0 & 0 & a_1e_1+a_2e_2 \\ 0 & 0 & b_1e_1+b_2e_1 \\ a_1e_1+a_2e_2 & b_1e_1+b_2e_2 & e_3 \end{pmatrix}}}

\def\Ctwelve{{\begin{pmatrix} 0 & 0 & e_1 \\ 0 & 0 & e_2 \\ e_1 & \lambda e_2 & e_3 \end{pmatrix}}}
\def\Ctwelvea{{\begin{pmatrix} a & 0 & 0 \\ 0 & b & 0 \\ 0 & 0 & 1 \end{pmatrix}}}
\def\CtwelveaM{{\begin{pmatrix} 0 & 0 & ae_1 \\ 0 & 0 & be_2 \\ ae_1 & \lambda be_2 & e_3 \end{pmatrix}}}

\def\Cthirteen{{\begin{pmatrix} 0 & 0 & e_1 \\ 0 & 0 & e_2 \\ \lambda e_1 & \eta e_2 & e_3 \end{pmatrix}}}
\def\Cthirteena{{\begin{pmatrix} a & 0 & 0 \\ 0 & b & 0 \\ 0 & 0 & 1 \end{pmatrix}}}
\def\CthirteenaM{{\begin{pmatrix} 0 & 0 & ae_1 \\ 0 & 0 & be_2 \\ \lambda ae_1 & \eta be_2 & e_3 \end{pmatrix}}}
\def\Cthirteenb{{\begin{pmatrix} a_1 & b_1 & 0 \\ a_2 & b_2 & 0 \\ 0 & 0 & 1 \end{pmatrix}}}
\def\CthirteenbM{{\begin{pmatrix} 0 & 0 & a_1e_1+a_2e_2 \\ 0 & 0 & b_1e_1+b_2e_2 \\ \lambda(a_1e_1+a_2e_2) & \eta(b_1e_1+b_2e_2) & e_3 \end{pmatrix}}}

\def\Cfourteen{{\begin{pmatrix} 0 & 0 & e_1 \\ 0 & 0 & e_2 \\ e_1 & e_1+e_2 & e_3 \end{pmatrix}}}
\def\Cfourteena{{\begin{pmatrix} a & b & 0 \\ 0 & a & 0 \\ 0 & 0 & 1 \end{pmatrix}}}
\def\CfourteenaM{{\begin{pmatrix} 0 & 0 & ae_1 \\ 0 & 0 & be_1+ae_2 \\ ae_1 & (a+b)e_1+ae_2 & e_3 \end{pmatrix}}}

\def\Cfifteen{{\begin{pmatrix} 0 & 0 & e_1 \\ 0 & 0 & e_2 \\ \lambda e_1 & e_1+\lambda e_2 & e_3 \end{pmatrix}}}
\def\Cfifteena{{\begin{pmatrix} a & b & 0 \\ 0 & a & 0 \\ 0 & 0 & 1 \end{pmatrix}}}
\def\CfifteenaM{{\begin{pmatrix} 0 & 0 & ae_1 \\ 0 & 0 & be_1+ae_2 \\ \lambda ae_1 & (a+\lambda b)e_1+\lambda ae_2 & e_3 \end{pmatrix}}}

\def\Csixteen{{\begin{pmatrix} 0 & 0 & e_1 \\ 0 & 0 & e_2 \\ 0 & e_1 & e_3 \end{pmatrix}}}
\def\Csixteena{{\begin{pmatrix} a & b & c \\ 0 & a & 0 \\ 0 & 0 & 1 \end{pmatrix}}}
\def\CsixteenaM{{\begin{pmatrix} 0 & 0 & ae_1 \\ 0 & 0 & be_1+ae_2 \\ 0 & ae_1 & ce_1+e_3 \end{pmatrix}}}

\def\Cseventeen{{\begin{pmatrix} 0 & 0 & e_1 \\ 0 & 0 & e_2 \\ 0 & e_1 & e_2+e_3 \end{pmatrix}}}
\def\Cseventeena{{\begin{pmatrix} 1 & b & c \\ 0 & 1 & b \\ 0 & 0 & 1 \end{pmatrix}}}
\def\CseventeenaM{{\begin{pmatrix} 0 & 0 & e_1 \\ 0 & 0 & be_1+e_2 \\ 0 & e_1 & (b+c)e_1+(1+b)e_2+e_3 \end{pmatrix}}}

\def\Ceighteen{{\begin{pmatrix} 0 & 0 & e_1+e_2 \\ 0 & 0 & e_2 \\ 0 & -e_2 & e_3 \end{pmatrix}}}
\def\Ceighteena{{\begin{pmatrix} a & 0 & c \\ 0 & a & c \\ 0 & 0 & 1 \end{pmatrix}}}
\def\CeighteenaM{{\begin{pmatrix} 0 & 0 & a(e_1+e_2) \\ 0 & 0 & ae_2 \\ 0 & -ae_2 & c(e_1+e_2)+e_3 \end{pmatrix}}}

\def\Cnineteen{{\begin{pmatrix} 0 & 0 & e_1+e_2 \\ 0 & 0 & e_2 \\ 0 & -e_2 & e_1+e_3 \end{pmatrix}}}
\def\Cnineteena{{\begin{pmatrix} 1 & 0 & c \\ 0 & 1 & c \\ 0 & 0 & 1 \end{pmatrix}}}
\def\CnineteenaM{{\begin{pmatrix} 0 & 0 & e_1+e_2 \\ 0 & 0 & e_2 \\ 0 & -e_2 & (1+c)e_1+ce_2+e_3 \end{pmatrix}}}


\def\Done{{\begin{pmatrix} e_2 & 0 & 0 \\ 0 & 0 & 0 \\ 0 & 0 & e_3 \end{pmatrix}}}
\def\Donea{{\begin{pmatrix} a_1 & 0 & 0 \\ a_2 & a_1^2 & 0 \\ 0 & 0 & c \end{pmatrix}}}
\def\DoneaM{{\begin{pmatrix} a_1^2e_2 & 0 & 0 \\ 0 & 0 & 0 \\ 0 & 0 & ce_3 \end{pmatrix}}}

\def\Dtwo{{\begin{pmatrix} e_2 & 0 & e_1 \\ 0 & 0 & e_2 \\ e_1 & e_2 & e_3 \end{pmatrix}}}
\def\Dtwoa{{\begin{pmatrix} a_1 & 0 & 0 \\ a_2 & a_1^2 & 0 \\ 0 & 0 & 1 \end{pmatrix}}}
\def\DtwoaM{{\begin{pmatrix} a_1^2e_2 & 0 & a_1e_1+a_2e_2 \\ 0 & 0 & a_1^2e_2 \\ a_1e_1+a_2e_2 & a_1^2e_2 & e_3 \end{pmatrix}}}

\def\Dthree{{\begin{pmatrix} e_2 & 0 & e_1 \\ 0 & 0 & e_2 \\ e_1+e_2 & e_2 & e_3 \end{pmatrix}}}
\def\Dthreea{{\begin{pmatrix} a_1 & 0 & 0 \\ a_2 & a_1 & 0 \\ 0 & 0 & 1 \end{pmatrix}}}
\def\DthreeaM{{\begin{pmatrix} a_1e_2 & 0 & a_1e_1+a_2e_2 \\ 0 & 0 & a_1e_2 \\ a_1e_1+(a_1+a_2)e_2 & a_1e_2 & e_3 \end{pmatrix}}}

\def\Dfour{{\begin{pmatrix} e_2 & 0 & e_1 \\ 0 & 0 & e_2 \\ \frac{1}{2}e_1 & 0 & e_3 \end{pmatrix}}}
\def\Dfoura{{\begin{pmatrix} a & 0 & 0 \\ 0 & a^2 & c \\ 0 & 0 & 1 \end{pmatrix}}}
\def\DfouraM{{\begin{pmatrix} a^2e_2 & 0 & ae_1 \\ 0 & 0 & a^2e_2 \\ \frac{1}{2}ae_1 & 0 & ce_2+e_3 \end{pmatrix}}}

\def\Dfive{{\begin{pmatrix} e_2 & 0 & e_1 \\ 0 & 0 & e_2 \\ \frac{1}{2}e_1 & 0 & e_2+e_3 \end{pmatrix}}}
\def\Dfivea{{\begin{pmatrix} \pm 1 & 0 & 0 \\ 0 & 1 & c \\ 0 & 0 & 1 \end{pmatrix}}}
\def\DfiveaM{{\begin{pmatrix} e_2 & 0 & \pm e_1 \\ 0 & 0 & e_2 \\ \pm\frac{1}{2}e_1 & 0 & (1+c)e_2+e_3 \end{pmatrix}}}

\def\Dsix{{\begin{pmatrix} e_2 & 0 & e_1 \\ 0 & 0 & e_2 \\ \lambda e_1 & (2\lambda - 1)e_2 & e_3 \end{pmatrix}}}
\def\Dsixa{{\begin{pmatrix} a & 0 & 0 \\ 0 & a^2 & 0 \\ 0 & 0 & 1 \end{pmatrix}}}
\def\DsixaM{{\begin{pmatrix} a^2e_2 & 0 & ae_1 \\ 0 & 0 & a^2e_2 \\ \lambda ae_1 & (2\lambda - 1)a^2e_2 & e_3 \end{pmatrix}}}
\def\Dsixb{{\begin{pmatrix} 0 & 0 & c \\ 0 & 0 & c^2 \\ 0 & 0 & 1 \end{pmatrix}}}
\def\DsixbM{{\begin{pmatrix} 0 & 0 & 0 \\ 0 & 0 & 0 \\ 0 & 0 & ce_1+c^2e_2+e_3 \end{pmatrix}}}


\def\Eone{{\begin{pmatrix} 0 & 0 & 0 \\ -e_1 & 0 & 0 \\ 0 & 0 & e_3 \end{pmatrix}}}
\def\Eonea{{\begin{pmatrix} 0 & b & 0 \\ 0 & 0 & 0 \\ 0 & 0 & c \end{pmatrix}}}
\def\EoneaM{{\begin{pmatrix} 0 & 0 & 0 \\ 0 & 0 & 0 \\ 0 & 0 & ce_3 \end{pmatrix}}}
\def\Eoneb{{\begin{pmatrix} a & 0 & 0 \\ 0 & 1 & 0 \\ 0 & 0 & c \end{pmatrix}}}
\def\EonebM{{\begin{pmatrix} 0 & 0 & 0 \\ -ae_1 & 0 & 0 \\ 0 & 0 & ce_3 \end{pmatrix}}}
\def\Eonec{{\begin{pmatrix} 0 & 0 & 0 \\ 0 & b & 0 \\ 0 & 0 & c \end{pmatrix}}}
\def\EonecM{{\begin{pmatrix} 0 & 0 & 0 \\ 0 & 0 & 0 \\ 0 & 0 & ce_3 \end{pmatrix}}}

\begin{document}

\title{Hom-Novikov algebras}
\author{Donald Yau}

\begin{abstract}
We study a twisted generalization of Novikov algebras, called Hom-Novikov algebras, in which the two defining identities are twisted by a linear map.  It is shown that Hom-Novikov algebras can be obtained from Novikov algebras by twisting along any algebra endomorphism.  All algebra endomorphisms on complex Novikov algebras of dimensions two or three are computed, and their associated Hom-Novikov algebras are described explicitly.  Another class of Hom-Novikov algebras is constructed from Hom-commutative algebras together with a derivation, generalizing a construction due to Dorfman and Gel'fand.  Two other classes of Hom-Novikov algebras are constructed from Hom-Lie algebras together with a suitable linear endomorphism, generalizing a construction due to Bai and Meng.
\end{abstract}

\keywords{Novikov algebra, left-symmetric algebra, Hom-Novikov algebra, Hom-Lie algebra.}

\subjclass[2000]{17A30, 17B81, 17D25, 81R10}

\address{Department of Mathematics\\
    The Ohio State University at Newark\\
    1179 University Drive\\
    Newark, OH 43055, USA}
\email{dyau@math.ohio-state.edu}

\date{\today}
\maketitle

\sqsp

\section{Introduction and Main Results}

Novikov algebras were introduced in the studies of Hamiltonian operators and Poisson brackets of hydrodynamic type \cite{bn,dn1,dn2,dg1,dg2,dg3,osborn1,osborn2,osborn3,xu1,xu2,xu3}.  They are closely related to many topics in mathematical physics and geometry, including Lie groups \cite{bm2,bm6,burde2,perea}, Lie algebras \cite{bm1,bm7,bdv,zelmanov}, affine manifolds \cite{kim}, convex homogeneous cones \cite{vinberg}, rooted tree algebras \cite{cayley}, vector fields \cite{burde1}, and vertex and conformal algebras \cite{borcherds,kac}.  Novikov algebras form a subclass of the class of left-symmetric algebras \cite{burde1}.  In particular, they are Lie-admissible algebras, which are important in some physical applications, such as quantum mechanics and hadronic structures \cite{albert,myung1,myung2,mos,santilli}.  In other words, if $A$ is a left-symmetric algebra (such as a Novikov algebra), then $A$ gives rise to a Lie algebra whose Lie bracket is the commutator bracket, $[x,y] = xy - yx$ for $x,y \in A$.

To be more precise, recall that a \emph{left-symmetric algebra}, also called a Vinberg algebra or a left pre-Lie algebra, is a vector space $A$ (over a ground field $\bk$ of characteristic $0$) together with a bilinear multiplication $\mu \colon A^{\otimes 2} \to A$ such that
\begin{equation}
\label{eq:leftsymmetric}
(xy)z - x(yz) = (yx)z - y(xz)
\end{equation}
for $x,y,z \in A$.  Here, and in the sequel, we often write $\mu(x,y)$ as $xy$.  In other words, if $a(x,y,z) = (xy)z - x(yz)$ denotes the associator, then \eqref{eq:leftsymmetric} says that $a(x,y,z)$ is symmetric in the first two variables, hence the name left-symmetric.  A \emph{Novikov algebra} is a left-symmetric algebra $A$ that satisfies the additional property
\begin{equation}
\label{eq:rightcom}
(xy)z = (xz)y
\end{equation}
for $x,y,z \in A$.  In other words, if $R_y$ denotes the right multiplication operator $x \mapsto xy$, then \eqref{eq:rightcom} says that the right multiplication operators on $A$ commute with one another.  Classifications of Novikov algebras, possibly with additional properties, are known in low dimensions \cite{bm1,bm3,bm3.5,bm4,bm5,bm7,bmh,bmh2,cz,ck,zc}.

The purpose of this paper is to study a twisted version of Novikov algebras, called \emph{Hom-Novikov algebras}, which are motivated by recent work related to Hom-type algebras.  A \emph{Hom-Lie algebra} consists of a vector space $L$, a linear self-map $\alpha$, and a bilinear, skew-symmetric bracket $[-,-] \colon L^{\otimes 2} \to L$, satisfying (i) $\alpha[x,y] = [\alpha(x),\alpha(y)]$ (multiplicativity) and (ii) the following \emph{Hom-Jacobi identity} for $x,y,z \in L$:
\begin{equation}
\label{eq:HomJacobi}
[[x,y],\alpha(z)] + [[z,x],\alpha(y)] + [[y,z],\alpha(x)] = 0.
\end{equation}
Lie algebras are examples of Hom-Lie algebras in which $\alpha$ is the identity map.  Earlier precursors of Hom-Lie algebras can be found in \cite{hu,liu}.  Hom-Lie algebras were introduced in \cite{hls} (without multiplicativity) to describe the structure on some $q$-deformations of the Witt and the Virasoro algebras.  Hom-Lie algebras are closely related to discrete and deformed vector fields and differential calculus \cite{hls,ls,ls2} and have applications to number theory \cite{larsson}, generalizations of the various Yang-Baxter equations, braid group representations, and related algebraic objects \cite{yau5,yau6,yau7,yau8,yau9}.

Just as Lie algebras are closely related to associative algebras, Hom-Lie algebras are closely related to the so-called Hom-associative algebras.  A \emph{Hom-associative algebra} \cite{ms} consists of a vector space $A$, a linear self-map $\alpha$, and a bilinear map $\mu \colon A^{\otimes 2} \to A$, satisfying (i) $\alpha(\mu(x,y)) = \mu(\alpha(x),\alpha(y))$ (multiplicativity) and (ii) Hom-associativity,
\begin{equation}
\label{eq:HomAs}
(xy)\alpha(z) = \alpha(x)(yz)
\end{equation}
for $x,y,z \in A$.  It is shown in \cite{ms} that, if $(A,\mu,\alpha)$ is a Hom-associative algebra, then $(A,[-,-],\alpha)$ is a Hom-Lie algebra, where $[x,y] = xy - yx$ is the commutator bracket.  Conversely, Hom-Lie algebras have universal enveloping Hom-associative algebras \cite{yau,yau3}.  Many examples of Hom-Lie and Hom-associative algebras are given in \cite{ms,ms4,yau2}.  Free Hom-algebras can be obtained using the constructions in \cite{yau,yau3}.  Related Hom-type algebras, in which the defining identities are similarly twisted by a linear map, have been studied in \cite{am,ams,cg,fg,fg2,gohr,makhlouf,ms2,ms3,yau3,yau4}.

Following the patterns of Hom-Lie and Hom-associative algebras, we define a \emph{Hom-Novikov algebra} as a triple $(A,\mu,\alpha)$ in which $A$ is a vector space, $\mu \colon A^{\otimes 2} \to A$ is a bilinear map, and $\alpha \colon A \to A$ is a linear map, satisfying the following three conditions for $x,y,z \in A$:
\begin{subequations}
\label{eq:HomN}
\begin{align}
\alpha(xy) &= \alpha(x)\alpha(y), \label{eq:HomN0}\\
(xy)\alpha(z) - \alpha(x)(yz) &= (yx)\alpha(z) - \alpha(y)(xz), \label{eq:HomN1}\\
(xy)\alpha(z) &= (xz)\alpha(y).\label{eq:HomN2}
\end{align}
\end{subequations}
Comparing \eqref{eq:HomN} with \eqref{eq:leftsymmetric} and \eqref{eq:rightcom}, we see that Novikov algebras are examples of Hom-Novikov algebras in which $\alpha$ is the identity map.  For a Hom-Novikov algebra $(A,\mu,\alpha)$, we call $\mu$ the \emph{Hom-Novikov product} of $A$.  Likewise, if only \eqref{eq:HomN0} and \eqref{eq:HomN1} are satisfied, then we call $(A,\mu,\alpha)$ a \emph{Hom-left-symmetric algebra}.  In particular, a Hom-Novikov algebra is a Hom-left-symmetric algebra that also satisfies \eqref{eq:HomN2}.

We now construct several classes of Hom-Novikov algebras, starting with either a Novikov algebra or a certain kind of Hom-algebra together with a suitable self-map.  Proofs will be given in later sections.

First, from the defining axioms \eqref{eq:HomN}, one can think of a Hom-Novikov algebra as an $\alpha$-twisted version of a Novikov algebra.  This intuitive interpretation can be made precise.  There is a general strategy, first used in \cite[Theorem 2.3]{yau2} and later in \cite{am,ams,fg,fg2,gohr,makhlouf,ms4,yau3,yau4}, to deform an algebraic structure to the corresponding type of Hom-algebra via an endomorphism.  If $\mu \colon A^{\otimes 2} \to A$ is a bilinear multiplication, then an \emph{algebra morphism} $f \colon A \to A$ is a linear map that commutes with the multiplication in $A$, i.e., $f \circ \mu = \mu \circ f^{\otimes 2}$.  The following result says that Novikov algebras deform into Hom-Novikov algebras along any algebra morphism.  This yields a large class of examples of Hom-Novikov algebras.

\begin{theorem}
\label{thm:deform}
Let $(A,\mu)$ be a Novikov algebra and $\alpha \colon A \to A$ be an algebra morphism.  Then $A_\alpha = (A,\mu_\alpha  = \alpha \circ \mu,\alpha)$ is a Hom-Novikov algebra.
\end{theorem}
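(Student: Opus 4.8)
The strategy is the standard Yau twisting argument: verify the three Hom-Novikov axioms \eqref{eq:HomN} for the triple $(A, \mu_\alpha, \alpha)$ directly, using that $\alpha$ is an algebra morphism for $(A,\mu)$ and that $(A,\mu)$ is a Novikov algebra. The key computational device is that for any $x,y \in A$ we have $\mu_\alpha(x,y) = \alpha(xy)$, and since $\alpha(xy) = \alpha(x)\alpha(y)$, iterated $\mu_\alpha$-products can be rewritten as $\mu$-products with $\alpha$ applied: for instance $\mu_\alpha(\mu_\alpha(x,y),z) = \alpha\bigl(\alpha(xy)\,z\bigr) = \alpha^2\bigl((xy)\,\alpha^{-1}?\bigr)$ — more carefully, $\mu_\alpha(\mu_\alpha(x,y),z) = \alpha\bigl((\mu_\alpha(x,y))\,z\bigr) = \alpha\bigl(\alpha(xy)\,z\bigr)$, and similarly $\mu_\alpha(\alpha(x),\mu_\alpha(y,z)) = \alpha\bigl(\alpha(x)\,\alpha(yz)\bigr) = \alpha\bigl(\alpha(x)\,\alpha(yz)\bigr)$.

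\textbf{Step 1 (multiplicativity).} Axiom \eqref{eq:HomN0} for $A_\alpha$ reads $\alpha(\mu_\alpha(x,y)) = \mu_\alpha(\alpha(x),\alpha(y))$, i.e. $\alpha(\alpha(xy)) = \alpha(\alpha(x)\alpha(y))$. This is immediate from $\alpha(xy) = \alpha(x)\alpha(y)$ by applying $\alpha$ once more.

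\textbf{Step 2 (Hom-left-symmetry).} For \eqref{eq:HomN1} I compute the $\mu_\alpha$-associator-type expression:
\[
\mu_\alpha(\mu_\alpha(x,y),z) - \mu_\alpha(\alpha(x),\mu_\alpha(y,z)) = \alpha\bigl(\alpha(xy)\cdot z\bigr) - \alpha\bigl(\alpha(x)\cdot\alpha(yz)\bigr).
\]
Using $\alpha(xy) = \alpha(x)\alpha(y)$ and $\alpha(yz)=\alpha(y)\alpha(z)$, the right side becomes $\alpha\bigl((\alpha(x)\alpha(y))\alpha(z) - \alpha(x)(\alpha(y)\alpha(z))\bigr) = \alpha\bigl(a(\alpha(x),\alpha(y),\alpha(z))\bigr)$, where $a$ is the associator of $(A,\mu)$. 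Since $(A,\mu)$ is left-symmetric, $a(u,v,w)$ is symmetric in $u,v$; applying this with $u=\alpha(x)$, $v=\alpha(y)$, $w=\alpha(z)$ and then pulling $\alpha$ back out gives exactly the symmetry in $x,y$ demanded by \eqref{eq:HomN1}.

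\textbf{Step 3 (twisted right-commutativity).} For \eqref{eq:HomN2} I compute $\mu_\alpha(\mu_\alpha(x,y),z) = \alpha\bigl(\alpha(xy)z\bigr) = \alpha\bigl((\alpha(x)\alpha(y))\alpha(z)\bigr)$; likewise $\mu_\alpha(\mu_\alpha(x,z),y) = \alpha\bigl((\alpha(x)\alpha(z))\alpha(y)\bigr)$. The Novikov identity \eqref{eq:rightcom} for $(A,\mu)$ gives $(\alpha(x)\alpha(y))\alpha(z) = (\alpha(x)\alpha(z))\alpha(y)$, and applying $\alpha$ yields \eqref{eq:HomN2}. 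There is no real obstacle here — the only thing to be careful about is the bookkeeping of where $\alpha$ lands, i.e. consistently using $\mu_\alpha(u,v) = \alpha(\mu(u,v))$ together with multiplicativity to push all occurrences of $\alpha$ outward before invoking the Novikov axioms. Once the expressions are reduced to $\alpha$ applied to a Novikov identity evaluated at $(\alpha(x),\alpha(y),\alpha(z))$, the result is immediate.
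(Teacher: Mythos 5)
Your overall strategy --- expand $\mu_\alpha$-expressions via multiplicativity of $\alpha$ until the Novikov identities for $\mu$ can be applied --- is exactly the paper's (the paper packages the expansion as Lemma~\ref{lem:mu}, giving $\mu_\alpha(\mu_\alpha(x,y),\alpha(z)) = \alpha^2((xy)z)$ and $\mu_\alpha(\alpha(x),\mu_\alpha(y,z)) = \alpha^2(x(yz))$, and cites a prior result for the Hom-left-symmetric part). However, as written your Steps 2 and 3 verify the wrong expressions and contain a compensating algebra error. The axioms \eqref{eq:HomN1} and \eqref{eq:HomN2} are statements about $\mu_\alpha(\mu_\alpha(x,y),\alpha(z))$, with $\alpha$ already applied to the outer factor, whereas you start from $\mu_\alpha(\mu_\alpha(x,y),z)$. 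Expanding the latter correctly gives $\alpha\bigl(\alpha(xy)\cdot z\bigr) = \alpha\bigl((\alpha(x)\alpha(y))\cdot z\bigr)$, \emph{not} $\alpha\bigl((\alpha(x)\alpha(y))\alpha(z)\bigr)$ as you claim: the step where the bare $z$ silently becomes $\alpha(z)$ is false in general, and it is precisely the ``$\alpha^{-1}?$'' puzzle you flagged in your plan and never resolved. The same slip occurs with $y$ in Step 3.

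The repair is immediate: verify the actual axioms, i.e., start from
\[
\mu_\alpha(\mu_\alpha(x,y),\alpha(z)) = \alpha\bigl(\alpha(xy)\,\alpha(z)\bigr) = \alpha\bigl(\alpha((xy)z)\bigr) = \alpha^2\bigl((xy)z\bigr),
\qquad
\mu_\alpha(\alpha(x),\mu_\alpha(y,z)) = \alpha^2\bigl(x(yz)\bigr).
\]
Then \eqref{eq:HomN1} for $\mu_\alpha$ reduces to $\alpha^2$ applied to the left-symmetry identity \eqref{eq:leftsymmetric} for $\mu$, and \eqref{eq:HomN2} reduces to $\alpha^2$ applied to $(xy)z=(xz)y$, which is the paper's proof verbatim. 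Your Step 1 is correct as written.
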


We call $(A,\mu_\alpha,\alpha)$ a Hom-Novikov deformation of the Novikov algebra $(A,\mu)$ along the algebra morphism $\alpha$.  One can use Theorem \ref{thm:deform} to obtain many examples of Hom-Novikov algebras.  In order to apply Theorem \ref{thm:deform} on a specific Novikov algebra $A$, one has to know at least some of the algebra morphisms on $A$.  As an illustration of the utility of Theorem \ref{thm:deform}, in Sections \ref{sec:2d} and \ref{sec:3d} we will classify all the algebra morphisms $\alpha$ on all the complex $2$-dimensional and $3$-dimensional Novikov algebras, and describe their corresponding Hom-Novikov products $\mu_\alpha = \alpha \circ \mu$.  We will use the classification of Novikov algebras over $\bC$ in dimensions at most three given in \cite{bm3}.  As a result, we obtain all the complex $2$-dimensional and $3$-dimensional Hom-Novikov algebras that can possibly be constructed using the twisting method in Theorem \ref{thm:deform}.  The sub-classes of algebra \emph{automorphisms} on complex Novikov algebras of dimensions $2$ and $3$ were computed in \cite{bm8}.

Next we discuss constructions of Hom-Novikov algebras that mimic known constructions of Novikov algebras.  One such construction of Novikov algebras, due to Dorfman and Gel'fand \cite{dg3}, starts with an associative and commutative algebra $(A,\mu)$ and a derivation $D \colon A \to A$.  The new product
\begin{equation}
\label{eq:GD}
a \ast b = \mu(a,D(b)) = aD(b)
\end{equation}
for $a, b \in A$ makes $(A,\ast)$ into a Novikov algebra.  To generalize this construction, we define a \emph{Hom-commutative algebra} to be a Hom-associative algebra whose multiplication is commutative.  A \emph{derivation} on a Hom-associative algebra is defined in the usual way.  Then we have the following result, generalizing the Dorfman-Gel'fand product \eqref{eq:GD}.

\begin{theorem}
\label{thm:GD}
Let $(A,\mu,\alpha)$ be a Hom-commutative algebra and $D \colon A \to A$ be a derivation such that $D\alpha = \alpha D$.  Then $(A,\ast,\alpha)$ is a Hom-Novikov algebra, where $\ast$ is defined as in \eqref{eq:GD}.
\end{theorem}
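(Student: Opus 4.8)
The plan is to verify the three defining axioms \eqref{eq:HomN0}, \eqref{eq:HomN1}, \eqref{eq:HomN2} for the triple $(A,\ast,\alpha)$ directly, using four ingredients: multiplicativity of $\alpha$ with respect to $\mu$, Hom-associativity \eqref{eq:HomAs}, commutativity of $\mu$, and the Leibniz rule for $D$ together with the commutation relation $D\alpha = \alpha D$.

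First I would dispatch multiplicativity \eqref{eq:HomN0}. For $a,b \in A$ one has
\[
\alpha(a \ast b) = \alpha(aD(b)) = \alpha(a)\,\alpha(D(b)) = \alpha(a)\,D(\alpha(b)) = \alpha(a) \ast \alpha(b),
\]
where the second equality is multiplicativity of $\alpha$ on $\mu$ and the third is $D\alpha = \alpha D$.

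The heart of the argument is a single computation of the ``left associator'' term $(a\ast b)\ast\alpha(c)$. Unwinding \eqref{eq:GD} and using $D\alpha = \alpha D$ gives $(a\ast b)\ast\alpha(c) = (aD(b))\,\alpha(D(c))$, and Hom-associativity \eqref{eq:HomAs} collapses this to $\alpha(a)\bigl(D(b)D(c)\bigr)$. On the other side, the Leibniz rule gives $\alpha(a)\ast(b\ast c) = \alpha(a)\,D(bD(c)) = \alpha(a)\bigl(D(b)D(c)\bigr) + \alpha(a)\bigl(bD^2(c)\bigr)$. Subtracting, the common term $\alpha(a)(D(b)D(c))$ cancels and we are left with
\[
(a\ast b)\ast\alpha(c) - \alpha(a)\ast(b\ast c) = -\,\alpha(a)\bigl(bD^2(c)\bigr).
\]
To conclude \eqref{eq:HomN1} it then suffices to show the right-hand side is symmetric in $a$ and $b$; this follows by moving $\alpha$ across twice, $\alpha(a)(bD^2(c)) = (ab)\,\alpha(D^2(c)) = (ba)\,\alpha(D^2(c)) = \alpha(b)(aD^2(c))$, where the outer equalities are Hom-associativity \eqref{eq:HomAs} and the middle one is commutativity of $\mu$. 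Finally, \eqref{eq:HomN2} is immediate from the same computation: $(a\ast b)\ast\alpha(c) = \alpha(a)\bigl(D(b)D(c)\bigr) = \alpha(a)\bigl(D(c)D(b)\bigr) = (a\ast c)\ast\alpha(b)$, again by commutativity of $\mu$.

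I do not expect a genuine obstacle: once the definitions are written out, every step is forced. The only point requiring care is the bookkeeping in the displayed associator identity and the symmetry argument for the residual term $\alpha(a)(bD^2(c))$, where one must apply Hom-associativity \eqref{eq:HomAs} in the correct direction rather than ordinary associativity, which is not available. Setting $\alpha$ equal to the identity map recovers the classical Dorfman--Gel'fand construction, and the proof above is precisely its Hom-deformation, with associativity replaced by \eqref{eq:HomAs} at each use.
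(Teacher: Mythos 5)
Your proposal is correct and follows essentially the same route as the paper: verify multiplicativity from $D\alpha=\alpha D$, reduce the left-symmetry defect to the single residual term $-\alpha(x)(yD^2(z))$ (equivalently $-(xy)\alpha(D^2(z))$ after one application of \eqref{eq:HomAs}) whose symmetry in $x,y$ follows from commutativity, and obtain \eqref{eq:HomN2} from $(x\ast y)\ast\alpha(z)=\alpha(x)(D(y)D(z))$ plus commutativity. The bookkeeping and each use of Hom-associativity are exactly as in the paper's argument.
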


The following result is a consequence of Theorem \ref{thm:GD}.

\begin{corollary}
\label{cor:GD}
Let $(A,\mu)$ be an associative and commutative algebra, $\alpha \colon A \to A$ be an algebra morphism, and $D \colon A \to A$ be a derivation such that $D \alpha = \alpha D$.  Then $(A,\ast,\alpha)$ is a Hom-Novikov algebra, where
\begin{equation}
\label{eq:astalpha}
x \ast y = \alpha(xD(y))
\end{equation}
for $x,y \in A$.
\end{corollary}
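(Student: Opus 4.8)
The plan is to deduce the Corollary from Theorem \ref{thm:GD} by first replacing the associative commutative product $\mu$ with its Yau twist $\mu_\alpha = \alpha \circ \mu$. By the general twisting principle \cite[Theorem 2.3]{yau2}, since $\alpha$ is an algebra morphism of the associative algebra $(A,\mu)$, the triple $(A,\mu_\alpha,\alpha)$ is a Hom-associative algebra. Commutativity of $\mu$ moreover gives $\mu_\alpha(x,y) = \alpha(xy) = \alpha(yx) = \mu_\alpha(y,x)$, so $(A,\mu_\alpha,\alpha)$ is in fact a Hom-commutative algebra.

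Next I would check that $D$ remains a derivation with respect to $\mu_\alpha$. Using the Leibniz rule for $D$ on $(A,\mu)$ together with the hypothesis $D\alpha = \alpha D$, one computes for $x,y \in A$:
\[
D(\mu_\alpha(x,y)) = D(\alpha(xy)) = \alpha(D(xy)) = \alpha\bigl(D(x)\,y + x\,D(y)\bigr) = \mu_\alpha(D(x),y) + \mu_\alpha(x,D(y)),
\]
so $D$ is a derivation on the Hom-commutative algebra $(A,\mu_\alpha,\alpha)$, and the compatibility $D\alpha = \alpha D$ of course still holds.

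Finally, I would apply Theorem \ref{thm:GD} to the Hom-commutative algebra $(A,\mu_\alpha,\alpha)$ equipped with the derivation $D$. It yields a Hom-Novikov algebra $(A,\ast,\alpha)$ whose product is $x \ast y = \mu_\alpha(x,D(y)) = \alpha(x\,D(y))$, which is precisely \eqref{eq:astalpha}. This finishes the proof.

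There is no genuine obstacle in this argument: the only point requiring attention is the verification that $D$ transports to a derivation of the twisted product $\mu_\alpha$, which is exactly where the commuting condition $D\alpha = \alpha D$ is invoked (and it is needed again to meet the hypothesis of Theorem \ref{thm:GD}). One could instead prove the Corollary directly by substituting $x \ast y = \alpha(xD(y))$ into the three axioms \eqref{eq:HomN0}--\eqref{eq:HomN2} and expanding via associativity, commutativity, the Leibniz rule, and $D\alpha = \alpha D$; but factoring through Theorem \ref{thm:GD} avoids duplicating that computation.
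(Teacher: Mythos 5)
Your proposal is correct and follows essentially the same route as the paper: twist $\mu$ to $\mu_\alpha = \alpha \circ \mu$ to get a Hom-commutative algebra, verify via $D\alpha = \alpha D$ and the Leibniz rule that $D$ remains a derivation for $\mu_\alpha$, and then apply Theorem \ref{thm:GD}. The only cosmetic difference is the citation (the paper invokes Corollary 2.5(1) of the same reference rather than Theorem 2.3 for the Hom-associativity of the twisted product).
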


Examples that illustrate Corollary \ref{cor:GD} will be given in Section \ref{sec:proof}.

Another known construction of Novikov algebras, due to Bai and Meng \cite{bm1}, starts with a Lie algebra and a suitable linear self-map.  It is a Lie algebra analogue of the Dorfman-Gel'fand product \eqref{eq:GD}.  For a Hom-Lie algebra $(L,[-,-],\alpha)$, write $Z(\alpha(L))$ for the subset of $L$ consisting of elements $x$ such that $[x,\alpha(y)] = 0$ for all $y \in L$.  Clearly if $L$ is a Lie algebra (i.e. $\alpha = Id$), then $Z(Id(L)) = Z(L)$ is the center of $L$.

\begin{theorem}
\label{thm:BM}
Let $(L,[-,-],\alpha)$ be a Hom-Lie algebra and $f \colon L \to L$ be a linear map such that $f\alpha = \alpha f$.  Define the products
\begin{equation}
\label{eq:BMproduct}
x \star y = [f(x),y], \quad x \star' y = [x,f(y)]
\end{equation}
for $x,y \in L$.  Then we have:
\begin{enumerate}
\item
$(L,\star,\alpha)$ is a Hom-Novikov algebra if and only if the following conditions hold for $x,y,z \in L$:
\begin{equation}
\label{eq:BM1}
f([f(x),y] + [x,f(y)]) - [f(x),f(y)] \in Z(\alpha(L))
\end{equation}
and
\begin{equation}
\label{eq:BM2}
[f([f(x),y]),\alpha(z)] = [f([f(x),z]),\alpha(y)].
\end{equation}
\item
$(L,\star',\alpha)$ is a Hom-Novikov algebra if and only if the following conditions hold for $x,y,z \in L$:
\begin{equation}
\label{eq:BM1'}
[[x,f(y)] + [f(x),y], f(\alpha(z))] - [\alpha(x),f([y,f(z)])] + [\alpha(y),f([x,f(z)])] = 0
\end{equation}
and
\begin{equation}
\label{eq:BM2'}
[f(x),f(y)] \in Z(\alpha(L)).
\end{equation}
\end{enumerate}
\end{theorem}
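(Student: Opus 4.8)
The plan is to verify the three defining identities \eqref{eq:HomN0}, \eqref{eq:HomN1}, \eqref{eq:HomN2} for each of the two products $\star$ and $\star'$; once these are expanded, both equivalences follow by elementary manipulation. The only non-formal ingredient is the following consequence of the Hom-Jacobi identity \eqref{eq:HomJacobi} and skew-symmetry: for all $a,b,c \in L$,
\[
[\alpha(a),[b,c]] - [\alpha(b),[a,c]] = [[a,b],\alpha(c)],
\]
obtained by writing out \eqref{eq:HomJacobi} for the triple $(a,b,c)$ and pulling the first-slot entries out of the inner brackets. The multiplicativity axiom \eqref{eq:HomN0} holds automatically for both products: using that $\alpha$ is multiplicative for $[-,-]$ together with $f\alpha = \alpha f$, one gets $\alpha(x \star y) = \alpha[f(x),y] = [f\alpha(x),\alpha(y)] = \alpha(x)\star\alpha(y)$, and likewise $\alpha(x \star' y) = \alpha(x)\star'\alpha(y)$.

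For part (1), with $x \star y = [f(x),y]$, the right-commutativity axiom \eqref{eq:HomN2} reads $[f([f(x),y]),\alpha(z)] = [f([f(x),z]),\alpha(y)]$, which is exactly \eqref{eq:BM2}. For the Hom-left-symmetry axiom \eqref{eq:HomN1} I would expand all four terms $(x\star y)\star\alpha(z)$, $\alpha(x)\star(y\star z)$, $(y\star x)\star\alpha(z)$, $\alpha(y)\star(x\star z)$; after cancellation and using $f\alpha = \alpha f$, the identity becomes
\[
\bigl[f([f(x),y]) - f([f(y),x]),\,\alpha(z)\bigr] - [\alpha(f(x)),[f(y),z]] + [\alpha(f(y)),[f(x),z]] = 0.
\]
Applying the displayed auxiliary identity with $a = f(x)$, $b = f(y)$, $c = z$ rewrites the last two terms as $-[[f(x),f(y)],\alpha(z)]$, and $f([f(x),y]) - f([f(y),x]) = f([f(x),y] + [x,f(y)])$ by skew-symmetry; hence \eqref{eq:HomN1} is equivalent to $\bigl[f([f(x),y]+[x,f(y)]) - [f(x),f(y)],\,\alpha(z)\bigr] = 0$ for all $z$, i.e.\ to the membership \eqref{eq:BM1}. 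Therefore $(L,\star,\alpha)$ is a Hom-Novikov algebra if and only if \eqref{eq:BM1} and \eqref{eq:BM2} hold.

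For part (2), with $x \star' y = [x,f(y)]$, axiom \eqref{eq:HomN2} reads $[[x,f(y)],f(\alpha(z))] = [[x,f(z)],f(\alpha(y))]$; rewriting $f\alpha = \alpha f$ and applying \eqref{eq:HomJacobi} to the triple $(x,f(y),f(z))$ shows the difference of the two sides equals $-[[f(y),f(z)],\alpha(x)]$, so \eqref{eq:HomN2} is equivalent to $[f(y),f(z)] \in Z(\alpha(L))$ for all $y,z$, i.e.\ to \eqref{eq:BM2'}. For \eqref{eq:HomN1}, a direct expansion of all four terms, followed by regrouping and the substitution $[x,f(y)] - [y,f(x)] = [x,f(y)] + [f(x),y]$, produces exactly \eqref{eq:BM1'} with no further appeal to Hom-Jacobi. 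Hence $(L,\star',\alpha)$ is a Hom-Novikov algebra if and only if \eqref{eq:BM1'} and \eqref{eq:BM2'} hold.

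The argument is mechanical once the auxiliary identity is available; the one place that demands care is the sign bookkeeping in expanding \eqref{eq:HomN1} for both products, where several iterated brackets must be reordered by skew-symmetry before \eqref{eq:HomJacobi} can be brought to bear. In practice I would write out each of the four terms of \eqref{eq:HomN1} side by side so that the cancellations and the single use of the Hom-Jacobi identity remain transparent.
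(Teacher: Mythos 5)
Your proposal is correct and follows essentially the same route as the paper's proof: multiplicativity from $f\alpha=\alpha f$, identification of \eqref{eq:BM2} and \eqref{eq:BM1'} as direct restatements of \eqref{eq:HomN2} and \eqref{eq:HomN1}, and a single application of the Hom-Jacobi identity (your ``auxiliary identity'' is just \eqref{eq:HomJacobi} rearranged by skew-symmetry) to convert the remaining condition in each part into \eqref{eq:BM1}, respectively \eqref{eq:BM2'}. All sign bookkeeping checks out.
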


Theorem \ref{thm:BM} is a generalization of a result in \cite{bm1} corresponding to $\alpha = Id$.  One can use Theorem \ref{thm:BM} to generate Hom-Novikov algebras, starting with any Hom-Lie algebra $L = (L,[-,-],\alpha)$ and any linear map $f$ on $L$ such that $f \alpha = \alpha f$.  Indeed, there is a unique (up to isomorphism) largest quotient Hom-Lie algebra $L' = (L',[-,-]',\alpha')$ of $L$ on which $f$ has an induced map $f'$ and in which the conditions \eqref{eq:BM1} and \eqref{eq:BM2} hold.  In particular, $(L',\star ,\alpha')$ is a Hom-Novikov algebra, where $\star$ is defined as in \eqref{eq:BMproduct} using $[-,-]'$ and $f'$.  The quotient Hom-Lie algebra $L'$ can be constructed using an inductive procedure similar to the one in \cite[4.2]{yau} and \cite[2.8]{yau3}.  There is a similar construction adapted to the other product $\star'$ in \eqref{eq:BMproduct} and the conditions \eqref{eq:BM1'} and \eqref{eq:BM2'}.  In general, however, the structure of the quotient $L'$ is not well understood.

Note that the expression in \eqref{eq:BM1} is interesting from another viewpoint.  Indeed, a Rota-Baxter operator \cite{baxter,rota1,rota2,rota3} $\beta$ on an associative algebra is a linear map that satisfies $\beta(\beta(x)y + x\beta(y)) = \beta(x)\beta(y)$.  The expression in \eqref{eq:BM1} is closely related to the Lie algebra version of a Rota-Baxter operator \cite{rsts,sts} and the operator form of the classical Yang-Baxter equation \cite{bai,ft}.

The rest of this paper is organized as follows.  In Section \ref{sec:proof}, we will prove the results stated above.  At the end of Section \ref{sec:proof}, we will give examples that illustrate Corollary \ref{cor:GD}, involving (Laurent) polynomial algebras (Example \ref{ex:poly}) and nilpotent derivations (Example \ref{ex:der}).  In Section \ref{sec:2d}, we will classify all the algebra morphisms $\alpha$ on all the complex $2$-dimensional Novikov algebras and describe their associated Hom-Novikov products $\mu_\alpha = \alpha \circ \mu$ (Theorem ~\ref{thm:deform}).  In Section \ref{sec:3d}, we will do the same thing for $3$-dimensional Novikov algebras and make a few concluding remarks.

\section{Proofs}
\label{sec:proof}

\begin{lemma}
\label{lem:mu}
Let $A = (A,\mu)$ be a not-necessarily associative algebra and $\alpha \colon A \to A$ be an algebra morphism.  Then the multiplication $\mu_\alpha = \alpha \circ \mu$ satisfies
\begin{equation}
\label{eq:mualpha}
\mu_\alpha(\mu_\alpha(x,y),\alpha(z)) = \alpha^2((xy)z) \quad\text{and}\quad
\mu_\alpha(\alpha(x),\mu_\alpha(y,z)) = \alpha^2(x(yz))
\end{equation}
for $x,y,z \in A$, where $\alpha^2 = \alpha \circ \alpha$.  Moreover, $\alpha$ is multiplicative with respect to $\mu_\alpha$, i.e., $\alpha \circ \mu_\alpha = \mu_\alpha \circ \alpha^{\otimes 2}$.
\end{lemma}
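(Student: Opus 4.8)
The plan is to unwind the definition $\mu_\alpha = \alpha\circ\mu$, i.e. $\mu_\alpha(x,y) = \alpha(xy)$, and then apply the hypothesis that $\alpha$ is an algebra morphism, which in the notation $xy = \mu(x,y)$ reads $\alpha(xy) = \alpha(x)\alpha(y)$. Everything reduces to substituting these two facts and keeping track of how many times $\alpha$ gets applied.

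First I would compute the left-hand identity. Writing $w = \mu_\alpha(x,y) = \alpha(xy)$, we have $\mu_\alpha(w,\alpha(z)) = \alpha\big(w\cdot\alpha(z)\big) = \alpha\big(\alpha(xy)\cdot\alpha(z)\big)$. Applying the morphism identity $\alpha(u)\alpha(v) = \alpha(uv)$ with $u = xy$ and $v = z$ gives $\alpha(xy)\cdot\alpha(z) = \alpha\big((xy)z\big)$, so the whole expression becomes $\alpha\big(\alpha((xy)z)\big) = \alpha^2((xy)z)$, as claimed. The second identity is entirely symmetric: $\mu_\alpha(\alpha(x),\mu_\alpha(y,z)) = \alpha\big(\alpha(x)\cdot\alpha(yz)\big) = \alpha\big(\alpha(x(yz))\big) = \alpha^2(x(yz))$.

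For the final multiplicativity claim I would compute both sides of $\alpha\circ\mu_\alpha = \mu_\alpha\circ\alpha^{\otimes 2}$ directly: on one hand $\alpha(\mu_\alpha(x,y)) = \alpha(\alpha(xy))$, and on the other $\mu_\alpha(\alpha(x),\alpha(y)) = \alpha\big(\alpha(x)\alpha(y)\big) = \alpha(\alpha(xy))$, using the morphism property once inside the outer $\alpha$. The two agree.

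There is no genuine obstacle here; the only thing to be careful about is the bookkeeping — the outer $\alpha$ built into $\mu_\alpha$, together with one use of the multiplicativity of $\alpha$, always produces exactly $\alpha^2$ of the corresponding untwisted expression. I would emphasize that this lemma is the computational core feeding directly into Theorem \ref{thm:deform}: subtracting the two displayed identities and exploiting the left-symmetry \eqref{eq:HomN1}-compatible structure and right-commutativity of the \emph{untwisted} product $\mu$ appearing inside $\alpha^2(\cdots)$ yields \eqref{eq:HomN1} and \eqref{eq:HomN2}, while the last assertion is \eqref{eq:HomN0}.
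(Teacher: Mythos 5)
Your proof is correct and follows essentially the same route as the paper: unwind $\mu_\alpha=\alpha\circ\mu$, apply the morphism identity once, and collect the two applications of $\alpha$ into $\alpha^2$; the multiplicativity claim is likewise the same observation (the paper phrases it as both sides equaling $\alpha\circ\mu\circ\alpha^{\otimes 2}$, which is what your computation shows pointwise). No issues.
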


\begin{proof}
Using the hypothesis that $\alpha$ is an algebra morphism, we have
\[
\begin{split}
\mu_\alpha(\mu_\alpha(x,y),\alpha(z))
&= \alpha(\alpha(xy)\alpha(z))\\
&= \alpha^2((xy)z),
\end{split}
\]
proving the first assertion in \eqref{eq:mualpha}.  The other assertion in \eqref{eq:mualpha} is proved similarly.  For the last assertion, observe that both $\alpha \circ \mu_\alpha$ and $\mu_\alpha \circ \alpha^{\otimes 2}$ are equal to $\alpha \circ \mu \circ \alpha^{\otimes 2}$.
\end{proof}

\begin{proof}[Proof of Theorem \ref{thm:deform}]
It is observed in \cite[Corollary 2.5 (3)]{yau2} that, if $A$ is a left-symmetric algebra, then $A_\alpha$ is a Hom-left-symmetric algebra.  Therefore, it remains to show \eqref{eq:HomN2} for the multiplication $\mu_\alpha = \alpha \circ \mu$.  We compute as follows:
\[
\begin{split}
\mu_\alpha(\mu_\alpha(x,y),\alpha(z))
&= \alpha^2((xy)z) \quad \text{by Lemma ~\ref{lem:mu}}\\
&= \alpha^2((xz)y) \quad \text{by \eqref{eq:rightcom}} \\
&= \mu_\alpha(\mu_\alpha(x,z),\alpha(y))  \quad \text{by Lemma ~\ref{lem:mu}}.
\end{split}
\]
This proves \eqref{eq:HomN2} for the multiplication $\mu_\alpha$.
\end{proof}

\begin{proof}[Proof of Theorem \ref{thm:GD}]
The multiplicativity of $\alpha$ with respect to $\ast$ \eqref{eq:GD} follows from the multiplicativity of $\alpha$ with respect to $\mu$ and the hypothesis $D\alpha = \alpha D$.  Next we check \eqref{eq:HomN1} for the multiplication $\ast$.  We have
\[
\begin{split}
(x \ast y) \ast \alpha(z) - \alpha(x) \ast (y \ast z)
&= (xD(y))D(\alpha(z)) - \alpha(x)D(yD(z))\\
&= (xD(y))\alpha(D(z)) - \alpha(x)(D(y)D(z)) - \alpha(x)(yD^2(z))\\
&= -(xy)\alpha(D^2(z)).
\end{split}
\]
The last equality follows from Hom-associativity \eqref{eq:HomAs}.  Since $\mu$ is assumed to be commutative, the expression $-(xy)\alpha(D^2(z))$ is symmetric in $x$ and $y$, proving \eqref{eq:HomN1}.

For \eqref{eq:HomN2}, we have
\[
\begin{split}
(x \ast y) \ast \alpha(z) &= (xD(y))\alpha(D(z))\\
&= \alpha(x)(D(y)D(z)),
\end{split}
\]
which is symmetric in $y$ and $z$ because $\mu$ is commutative.
\end{proof}

\begin{proof}[Proof of Corollary \ref{cor:GD}]
By \cite[Corollary 2.5 (1)]{yau2}, $(A,\mu_\alpha = \alpha \circ \mu,\alpha)$ is a Hom-associative algebra.  Since $\mu$ is commutative, so is $\mu_\alpha$.  Thus, using Theorem \ref{thm:GD}, it suffices to show that $D$ is a derivation with respect to the multiplication $\mu_\alpha$.  We compute as follows:
\[
\begin{split}
D(\mu_\alpha(x,y)) &= D(\alpha(xy))\\
&= \alpha(D(xy))\\
&= \alpha(D(x)y + xD(y))\\
&= \mu_\alpha(D(x),y) + \mu_\alpha(x,D(y)).
\end{split}
\]
This shows that $D$ is a derivation with respect to the multiplication $\mu_\alpha$, as desired.
\end{proof}

\begin{proof}[Proof of Theorem \ref{thm:BM}]
First note that $\alpha$ is multiplicative with respect to both $\star$ and $\star'$ \eqref{eq:BMproduct}, since $\alpha \circ [-,-] = [-,-] \circ \alpha^{\otimes 2}$ and $f\alpha = \alpha f$.

Consider the first assertion.  The condition ~\eqref{eq:HomN1} for the multiplication $\star$ \eqref{eq:BMproduct} says
\[
[f([f(x),y]),\alpha(z)] - [f(\alpha(x)),[f(y),z]] = [f([f(y),x]),\alpha(z)] - [f(\alpha(y)),[f(x),z]].
\]
Using $f\alpha = \alpha f$ and the skew-symmetry of $[-,-]$, we can rewrite the above equality as
\[
\begin{split}
[f([f(x),y]) + f([x,f(y)]),\alpha(z)]
&= [\alpha(f(x)),[f(y),z]] + [\alpha(f(y)),[z,f(x)]]\\
&= -[\alpha(z),[f(x),f(y)]].
\end{split}
\]
The last equality follows from the Hom-Jacobi identity \eqref{eq:HomJacobi}.  Thus, ~\eqref{eq:HomN1} holds for the multiplication $\star$ if and only if
\[
[f([f(x),y]) + f([x,f(y)]) - [f(x),f(y)],\alpha(z)] = 0,
\]
which is equivalent to \eqref{eq:BM1}.  The condition \eqref{eq:BM2} is simply a restatement of \eqref{eq:HomN2} for the multiplication $\star$.  This proves the first assertion of Theorem ~\ref{thm:BM}.

The second assertion is proved by essentially the same argument, with \eqref{eq:BM1'} and \eqref{eq:BM2'} corresponding to \eqref{eq:HomN1} and \eqref{eq:HomN2}, respectively.  Indeed, \eqref{eq:BM1'} is a restatement of \eqref{eq:HomN1} for the multiplication $\star'$ \eqref{eq:BMproduct}.  On the other hand, the condition \eqref{eq:HomN2} for the multiplication $\star'$ says
\[
[[x,f(y)],f(\alpha(z))] = [[x,f(z)],f(\alpha(y))],
\]
which is equivalent to
\[
[[x,f(y)],\alpha(f(z))] + [[f(z),x],\alpha(f(y))] = 0.
\]
Using the Hom-Jacobi identity \eqref{eq:HomJacobi}, the previous line is equivalent to
\[
[[f(y),f(z)],\alpha(x)] = 0,
\]
which is exactly \eqref{eq:BM2'}.
\end{proof}

The following two examples illustrate how Corollary \ref{cor:GD} can be applied to create concrete examples of Hom-Novikov algebras.

\begin{example}[\textbf{Hom-Novikov algebras from (Laurent) polynomial algebras}]
\label{ex:poly}
Consider the one-variable polynomial algebra $\bk[x]$, the differential operator $D = d/dx$ on $\bk[x]$, and the algebra morphism $\alpha$ on $\bk[x]$ determined by $\alpha(x) = x+c$, where $c \in \bk$ is a fixed element.  We have
\[
\begin{split}
D(\alpha(x^n)) &= D((x+c)^n)\\
&= n(x+c)^{n-1}\\
&= \alpha(D(x^n)),
\end{split}
\]
which is sufficient to conclude that $D\alpha = \alpha D$.  By Corollary \ref{cor:GD}, the new product
\[
f \ast g = \alpha(fD(g)) = f(x+c)(D(g)(x+c))
\]
yields a Hom-Novikov algebra $(\bk[x],\ast,\alpha)$.

The previous paragraph can be easily generalized to the $n$-variable (Laurent) case.  Essentially the same reasoning as above gives the Hom-Novikov algebras $(\bk[x_1, \ldots , x_n],\ast,\alpha)$ and $(\bk[x_1^{\pm 1}, \ldots , x_n^{\pm 1}],\ast,\alpha)$, where $\alpha$ is determined by $\alpha(x_j) = x_j + c_j$ with each $c_j \in \bk$.  The Hom-Novikov product $f \ast g$ is again given by $\alpha(fD(g))$, where $D = \partial/\partial x_i$ is the partial differential operator with respect to $x_i$ for some fixed $i \in \{1, \ldots , n\}$.\qed
\end{example}

\begin{example}[\textbf{Hom-Novikov algebras from nilpotent derivations}]
\label{ex:der}
Let $A$ be an associative and commutative algebra and $D \colon A \to A$ be a \emph{nilpotent} derivation on $A$.  In other words, $D$ is a derivation such that $D^n = 0$ for some $n \geq 2$.  For example, if $x \in A$ is a nilpotent element, say, $x^n = 0$, then the linear self-map $ad(x)$ on $A$ defined by $ad(x)(y) = xy-yx$ is a nilpotent derivation on $A$.  Given such a nilpotent derivation $D$ with $D^n = 0$, the formal exponential map
\[
E = \exp D = Id_A + D + \frac{1}{2!}D^2 + \cdots + \frac{1}{(n-1)!}D^{n-1}
\]
is an algebra automorphism on $A$ \cite[p.26]{abe}.  Since $E$ is a polynomial in $D$, it follows that $DE = ED$.  Therefore, by Corollary ~\ref{cor:GD}, the new product $x \ast y = E(xD(y))$ yields a Hom-Novikov algebra $(A,\ast,E)$.\qed
\end{example}

\section{Examples of $2$-dimensional Hom-Novikov algebras}
\label{sec:2d}

In this and the next sections, we work over the ground field $\bC$ of complex numbers.  The purpose of this section is to classify all the algebra morphisms $\alpha$ on all the $2$-dimensional Novikov algebras $(A,\mu)$ (i.e., $\alpha \circ \mu = \mu \circ \alpha^{\otimes 2}$), using the classification of $2$-dimensional Novikov algebras in \cite{bm3}.  From Theorem ~\ref{thm:deform}, we then obtain their corresponding $2$-dimensional Hom-Novikov algebras $A_\alpha = (A,\mu_\alpha = \alpha \circ \mu,\alpha)$.

First let us establish some notations, which will also be used in the next section.  Let $(A,\mu)$ be an $n$-dimensional Novikov algebra with a fixed $\bC$-linear basis $\{e_1, \ldots, e_n\}$.  The multiplication $\mu$ is completely determined by its $n \times n$ \emph{characteristic matrix} $M(\mu) = (e_ie_j)$, whose $(i,j)$-entry is
\[
\mu(e_i,e_j) = e_ie_j = \sum_{k=1}^n d^{ij}_ke_k,
\]
where the $d^{ij}_k$ are the structure scalars of $\mu$.  If $\alpha \colon A \to A$ is an algebra morphism and $\mu_\alpha = \alpha \circ \mu$ is the associated Hom-Novikov product (Theorem ~\ref{thm:deform}), then its characteristic matrix is defined similarly as $M(\mu_\alpha) = (\alpha(e_ie_j))$, i.e., its $(i,j)$-entry is $\sum_{k=1}^n d^{ij}_k\alpha(e_k)$.

We denote a linear map $\alpha \colon A \to A$ by its $n \times n$-matrix $M(\alpha)$ with respect to the basis $\{e_1, \ldots , e_n\}$.  In other words, $M(\alpha) = (a_{ij})$, where the scalars $a_{ij}$ are defined by $\alpha(e_i) = \sum_{k=1}^n a_{ki}e_k$ for $1 \leq i \leq n$.

Note that the $0$-map, $\alpha(e_i) = 0$ for all $i$, is always an algebra morphism, and the characteristic matrix $M(\mu_\alpha) = (0)$.  In other words, the $0$-map gives rise to the trivial Hom-Novikov algebra $(A,\mu_\alpha=0,\alpha=0)$, regardless of what $\mu$ is.  Therefore, in what follows, \emph{we will omit mentioning the $0$-map explicitly to avoid unnecessary repetitions}.

A linear map $\alpha$ on $(A,\mu)$ is an algebra morphism if and only if it commutes with $\mu$, i.e., $\mu \circ \alpha^{\otimes 2} = \alpha \circ \mu$.  To determine which linear maps $\alpha$ are algebra morphisms, we apply both $\mu \circ \alpha^{\otimes 2}$ and $\alpha \circ \mu$ to the $n^2$ $\bC$-linear basis elements $\{e_1 \otimes e_1, e_1 \otimes e_2, \ldots, e_n \otimes e_n\}$ of $A^{\otimes 2}$.  Using elementary algebra, we then solve the resulting $n^2$ simultaneous equations
\[
(\mu \circ \alpha^{\otimes 2})(e_i \otimes e_j) = (\alpha \circ \mu)(e_i \otimes e_j)
\]
for the entries in $M(\alpha)$.  These simultaneous equations are tedious, but they are not difficult to solve when $A$ has dimensions two or three.

When the Novikov algebra $(A,\mu)$ has dimension two, all of its algebra morphisms are listed in Table ~\ref{2dtable}, where $\mu$, $\alpha$, and $\mu_\alpha = \alpha \circ \mu$ are presented as their respective matrices.  The first column in Table ~\ref{2dtable} lists all the (isomorphism classes of) $2$-dimensional Novikov algebras, which are classified in \cite[Table 1]{bm3}.  The notations $(T1)$, $(T2)$, etc. are also taken from \cite{bm3}.  The second column in Table ~\ref{2dtable} lists all the algebra morphisms of the Novikov algebra, and the third column lists the corresponding Hom-Novikov products $\mu_\alpha$.  The scalars $a, b, a_1, a_2, b_1, b_2$, and $\lambda$ run through $\bC$, unless explicitly stated otherwise under the respective matrices.

\begin{center}
\begin{longtable}{lll}
\caption{Algebra morphisms on $2$-dimensional Novikov algebras and their associated Hom-Novikov products.}\label{2dtable}\\

\hline Novikov algebra $M(\mu)$ & Algebra morphisms $M(\alpha)$ & Hom-Novikov products $M(\mu_\alpha)$ \\ \hline
\endfirsthead

\multicolumn{3}{c}{Table \ref{2dtable} continued} \\ \hline
Novikov algebra $M(\mu)$ & Algebra morphisms $M(\alpha)$ & Hom-Novikov products $M(\mu_\alpha)$ \\ \hline
\endhead

$(T1) = \Tone$ & $\Tonea$ & $\Tone$ \\
$(T2) = \Ttwo$ & $\Ttwoa$ & $\TtwoaM$ \\
$(T3) = \Tthree$ & $\Tthreea$, $\Tthreeb$ & $\Tone$, $\TthreebM$ \\
 & $b_1b_2 = 0$  \quad $a \not= 0$ & \\
$(N1) = \None$ & $\Nonea$ & $\NoneaM$ \\
& $a_i, b_i \in \{0,1\}$ and & \\
& $a_ib_i = 0$ for $i=1,2$ & \\
$(N2) = \Ntwo$ & $\Ntwoa$ & $\NtwoaM$ \\
& $a \in \{0,1\}$ & \\
$(N3) = \Nthree$ & $\Nthreea$ & $\NthreeaM$ \\
$(N4) = \Nfour$ & $\Nfoura$ & $\NfouraM$ \\
$(N5) = \Nfive$ & $\Nfivea$ & $\NfiveaM$ \\
$(N6) = \Nsix$ & $\Nsixa$  & $\NsixaM$ \\
\hspace{1.3cm} $\lambda \not= 0,1$ & ~$a \not= 0$ & \\ \hline
\end{longtable}
\end{center}

We make the following observations from Table ~\ref{2dtable}.
\begin{enumerate}
\item
$(T1)$ and $(N2)$ do not have non-trivial (i.e., non-zero and non-identity) deformations into Hom-Novikov algebras via algebra morphisms, while the other seven $2$-dimensional Novikov algebras do.
\item
All the $2$-dimensional Novikov algebras have infinitely many algebra morphisms, except $(N1)$, which has exactly nine, including the $0$-map.
\item
All the $2$-dimensional Novikov algebras have infinitely many non-identity algebra automorphisms, except $(N1)$, which has only one with $\alpha(e_1) = e_2$, $\alpha(e_2) = e_1$.
\item
For $(N5)$ and $(N6)$, all the non-zero algebra morphisms are invertible, since the determinants for $M(\alpha)$ in these cases are always non-zero.
\item
For each of $(N2)$, $(N3)$, $(N5)$, and $(N6)$, the algebra morphisms commute with one another.
\item
$(T1)$, $(T2)$, $(T3)$, and $(N1)$ have non-zero nilpotent algebra morphisms, while the other five do not.
\end{enumerate}

\section{Examples of $3$-dimensional Hom-Novikov algebras}
\label{sec:3d}

We continue to work over the ground field $\bC$ of complex numbers.  The purpose of this section is to classify all the algebra morphisms $\alpha$ on all the $3$-dimensional Novikov algebras $(A,\mu)$ (i.e., $\alpha \circ \mu = \mu \circ \alpha^{\otimes 2}$).  We use the classification of $3$-dimensional Novikov algebras in \cite{bm3}.  These Novikov algebras are stated in the first columns of the tables below.  From Theorem ~\ref{thm:deform}, we then obtain their corresponding $3$-dimensional Hom-Novikov algebras $A_\alpha = (A,\mu_\alpha = \alpha \circ \mu,\alpha)$.

We use the same notations and conventions as in the previous section.  In particular, with respect to a fixed $\bC$-linear basis $\{e_1,e_2,e_3\}$ of $A$, the multiplications $\mu$ and $\mu_\alpha = \alpha \circ \mu$ and the map $\alpha \colon A \to A$ are presented as their respective $3 \times 3$ matrices $M(\mu) = (e_ie_j)$, $M(\mu_\alpha) = (\alpha(e_ie_j))$, and $M(\alpha)$.  To compute the desired algebra morphisms, we use the same elementary method as described in the previous section.  More precisely, for each $3$-dimensional Novikov algebra $(A,\mu)$, we solve the nine simultaneous equations
\[
(\mu \circ \alpha^{\otimes 2})(e_i \otimes e_j) = (\alpha \circ \mu)(e_i \otimes e_j)
\]
with $1 \leq i,j \leq 3$ for the entries in $M(\alpha)$.  The computation is tedious but conceptually elementary.

In the tables below, the scalars $a$, $b$, $c$, $a_1, \ldots , c_3$, and $\lambda$ run through $\bC$, unless explicitly stated otherwise underneath the matrix (e.g., case $(A7)$).  In case $(A6)$, the symbol $\sqrt{\lambda}$ denotes either one of the two square roots of $\lambda$.  The $0$ matrix is denoted by $(0)$.  To make the tables easier to read, we sometimes state the algebra morphisms with slight overlap.  For example, in case $(A3)$, if one sets $c_2 = b_3 = b_2  = 0$, then all four types of algebra morphisms coincide.  As in the previous section, the $0$-map on $A$ will not be stated separately or explicitly, since it is always an algebra morphism with trivial associated Hom-Novikov product.

The classification of Novikov algebras $(A,\mu)$ of dimension three given in \cite{bm3} is divided into five classes, $A$ through $E$.  Together with their associated Hom-Novikov products $\mu_\alpha = \alpha \circ \mu$ (Theorem ~\ref{thm:deform}), the classifications of algebra morphisms on $3$-dimensional Novikov algebras are listed in the five tables below, one for each of the five classes.

\small


\begin{center}
\begin{longtable}{lll}
\caption{Algebra morphisms on $3$-dimensional Novikov algebras of type $A$ and their associated Hom-Novikov products.}\label{3dtableA}\\

\hline Novikov algebra $M(\mu)$ & Algebra morphisms $M(\alpha)$ & Hom-Novikov products $M(\mu_\alpha)$ \\ \hline
\endfirsthead

\multicolumn{3}{c}{Table \ref{3dtableA}: Continued.} \\ \hline
Novikov algebra $M(\mu)$ & Algebra morphisms $M(\alpha)$ & Hom-Novikov products $M(\mu_\alpha)$ \\ \hline
\endhead

\hline
\endfoot

$(A1) = (0)$ & $\Aonea$ & $(0)$ \\
$(A2) = \Atwo$ & $\Atwoa$ & $\AtwoaM$ \\
$(A3) = \Athree$ & $\Athreea$ & $(0)$ \\
& $\Athreeb$ & $\AthreebM$ \\
& $\Athreec$ & $\AthreecM$ \\
& $\Athreed$ & $(0)$ \\
$(A4) = \Afour$ & $\Afoura$ & $\AfouraM$ \\
$(A5) = \Afive$ & $\Afivea$ & $\AfiveaM$ \\
$(A6) = \Asix$ & $\Asixa$ & $(0)$ \\
& \quad $\lambda = 0$ & \\
& $\Asixb$ & $\AsixbM$ \\
& \quad $\lambda = 0$ & \\
& $\Asixc$ & $(0)$ \\
& \quad $\lambda \not= 0$ & \\
& $\Asixd$ & $\AsixdM$ \\
& \quad $\lambda \not= 0$ & \\
& $\Asixe$ & $\AsixeM$ \\
& \quad $\lambda \not= 0$ & \\
& $\Asixf$ & $(0)$ \\
& \quad $\lambda \not= 0$ & \\
$(A7) = \Aseven$ & $\Asevena$ & $\AsevenaM$ \\
\multicolumn{1}{c}{$\lambda \not= 1$} & & \\
$(A8) = \Aeight$ & $\Aeighta$ & $\AeightaM$ \\
$(A9) = \Anine$ & $\Aninea$ & $(0)$ \\
& $\Anineb$ & $(0)$ \\
& $\Aninec$ & $\AninecM$ \\
$(A10) = \Aten$ & $\Atena$ & $\AtenaM$ \\
& $\Atenb$ & $(0)$ \\
& $\Atenc$ & $\AtencM$ \\
$(A11) = \Aeleven$ & $\Aelevena$ & $(0)$ \\
\multicolumn{1}{c}{$0 \not= |\lambda| \leq 1$} & & \\
& $\Aelevenb$ & $(0)$ \\
& \quad $c \not= 0,1$ & \\
& $\Aelevenc$ & $\AelevencM$ \\
& \quad $\lambda \not= 1$ & \\
& $\Aelevend$ & $\AelevendM$ \\
& \quad $\lambda \not= 1$ & \\
& $\Aelevene$ & $\AeleveneM$ \\
& \quad $\lambda = -1$ & \\
& $\Aelevenf$ & $\AelevenfM$ \\
& \quad $\lambda = 1$ & \\
& $\Aeleveng$ & $\AelevengM$ \\
& \quad $\lambda \not= 1$ & \\
$(A12) = \Atwelve$ & $\Atwelvea$ & $(0)$ \\
& $c_1 = c_2 = 0$ or $c_3 = 0$ & \\
& $\Atwelvec$ & $\AtwelvecM$ \\
$(A13) = \Athirteen$ & $\Athirteena$ & $(0)$ \\
& \quad $c_1c_3 = 0$ & \\
& $\Athirteenb$ & $\AthirteenbM$ \\
& $\Athirteenc$ & $\AthirteencM$ \\
\end{longtable}
\end{center}


\normalsize
The case $(B0)$ below is discussed in \cite[4.1]{bm3}, and is not stated with the other $3$-dimensional Novikov algebras of type $B$ in \cite[Table 3]{bm3}.
\small

\begin{center}
\begin{longtable}{lll}
\caption{Algebra morphisms on $3$-dimensional Novikov algebras of type $B$ and their associated Hom-Novikov products.}\label{3dtableB}\\

\hline Novikov algebra $M(\mu)$ & Algebra morphisms $M(\alpha)$ & Hom-Novikov products $M(\mu_\alpha)$ \\ \hline
\endfirsthead

\multicolumn{3}{c}{Table \ref{3dtableB}: Continued.} \\ \hline
Novikov algebra $M(\mu)$ & Algebra morphisms $M(\alpha)$ & Hom-Novikov products $M(\mu_\alpha)$ \\ \hline
\endhead

\hline
\endfoot

$(B0) = \Bzero$ & $\Bzeroa$ & $\BzeroaM$ \\
& $a_ib_i = a_ic_i = b_ic_i = 0$, & \\
& $a_i, b_i, c_i \in \{0,1\}$ & \\
& for $i = 1,2,3$ & \\
$(B1) = \Bone$ & $\Bonea$ & $\BoneaM$ \\
& $c_2, c_3 \in \{0,1\}$ & \\
& $\Boneb$ & $\BonebM$ \\
& $b_2, b_3 \in \{0,1\}$ \\
& $\Bonec$ & $\BonecM$ \\
& $b, c \in \{0,1\}$ & \\
& $\Boned$ & $\BonedM$ \\
& $b, c \in \{0,1\}$ & \\
$(B2) = \Btwo$ & $\Btwoa$ & $\BtwoaM$ \\
& $b_2, b_3, c \in \{0,1\}$ and $b_2c = 0$ & \\
& $\Btwob$ & $\BtwobM$ \\
& $b, c \in \{0,1\}$ and $bc = 0$ & \\
$(B3) = \Bthree$ & $\Bthreea$ & $\BthreeaM$ \\
& $b, c \in \{0,1\}$ and $bc = 0$ & \\
& $\Bthreeb$ & $\BthreebM$ \\
& $b_2, c \in \{0,1\}$ and $b_2c = 0$ & \\
& $\Bthreec$ & $\BthreecM$ \\
& $b, c_2 \in \{0,1\}$ and $bc_2 = 0$ & \\
$(B4) = \Bfour$ & $\Bfoura$ & $\BfouraM$ \\
& $b, c \in \{0,1\}$ and $bc = 0$ & \\
& $\Bfourb$ & $\BfourbM$ \\
& $b, c_2 \in \{0,1\}$ and $bc_2 = 0$ & \\
$(B5) = \Bfive$ & $\Bfivea$ & $\BfiveaM$ \\
\multicolumn{1}{c}{$\lambda \not= 0,1$} & & \\
& $\Bfiveb$ & $\BfivebM$ \\
& $\Bfivec$ & $\BfivecM$ \\
& $\Bfived$ & $\BfivedM$ \\
& $\Bfivee$ & $\BfiveeM$ \\
& $\Bfivef$ & $\BfivefM$ \\
& $\Bfiveg$ & $\BfivegM$ \\
& $\Bfiveh$ & $\BfivehM$ \\
\end{longtable}
\end{center}


\begin{center}
\begin{longtable}{lll}
\caption{Algebra morphisms on $3$-dimensional Novikov algebras of type $C$ and their associated Hom-Novikov products.}\label{3dtableC}\\

\hline Novikov algebra $M(\mu)$ & Algebra & Hom-Novikov products $M(\mu_\alpha)$ \\
& morphisms $M(\alpha)$ & \\ \hline
\endfirsthead

\multicolumn{3}{c}{Table \ref{3dtableC}: Continued.} \\ \hline
Novikov algebra $M(\mu)$ & Algebra & Hom-Novikov products $M(\mu_\alpha)$ \\
& morphisms $M(\alpha)$ & \\ \hline
\endhead

\hline
\endfoot

$(C1) = \Cone$ & $\Conea$ & $\ConeaM$ \\
& \quad $c \in \{0,1\}$ & \\
$(C2) = \Ctwo$ & $\Ctwoa$ & $(0)$ \\
& $\Ctwob$ & $\CtwobM$ \\
$(C3) = \Cthree$ & $\Cthreea$ & $(0)$ \\
& $\Cthreeb$ & $\CthreebM$ \\
$(C4) = \Cfour$ & $\Cfoura$ & $(0)$ \\
& $\Cfourb$ & $\CfourbM$ \\
$(C5) = \Cfive$ & $\Cfivea$ & $(0)$ \\
\multicolumn{1}{c}{$\lambda \not= 0,1$} & & \\
& $\Cfiveb$ & $\CfivebM$ \\
$(C6) = \Csix$ & $\Csixa$ & $\CsixaM$ \\
$(C7) = \Cseven$ & $\Csevena$ & $\CsevenaM$ \\
$(C8) = \Ceight$ & $\Ceighta$ & $\CeightaM$ \\
$(C9) = \Cnine$ & $\Cninea$ & $\CnineaM$ \\
\multicolumn{1}{c}{$\lambda \not= 0,1$} & & \\
$(C10) = \Cten$ & $\Ctena$ & $\CtenaM$ \\
\multicolumn{1}{c}{$\lambda \not= 1$} & $\lambda a_2 = 0 = \lambda c_1$ & \\
$(C11) = \Celeven$ & $\Celevena$ & $\CelevenaM$ \\
$(C12) = \Ctwelve$ & $\Ctwelvea$ & $\CtwelveaM$ \\
\multicolumn{1}{c}{$\lambda \not= 0,1$} & & \\
$(C13) = \Cthirteen$ & $\Cthirteena$ & $\CthirteenaM$ \\
\multicolumn{1}{c}{$\lambda, \eta \not= 0,1$} & \quad if $\eta \not= \lambda$ & \\
& $\Cthirteenb$ & $\CthirteenbM$ \\
& \quad if $\eta = \lambda$ & \\
$(C14) = \Cfourteen$ & $\Cfourteena$ & $\CfourteenaM$ \\
$(C15) = \Cfifteen$ & $\Cfifteena$ & $\CfifteenaM$ \\
\multicolumn{1}{c}{$\lambda \not= 0,1$} & & \\
$(C16) = \Csixteen$ & $\Csixteena$ & $\CsixteenaM$ \\
$(C17) = \Cseventeen$ & $\Cseventeena$ & $\CseventeenaM$ \\
$(C18) = \Ceighteen$ & $\Ceighteena$ & $\CeighteenaM$ \\
$(C19) = \Cnineteen$ & $\Cnineteena$ & $\CnineteenaM$ \\
\end{longtable}
\end{center}


\begin{center}
\begin{longtable}{lll}
\caption{Algebra morphisms on $3$-dimensional Novikov algebras of type $D$ and their associated Hom-Novikov products.}\label{3dtableD}\\

\hline Novikov algebra $M(\mu)$ & Algebra & Hom-Novikov products $M(\mu_\alpha)$ \\
& morphisms $M(\alpha)$ & \\ \hline
\endfirsthead

\multicolumn{3}{c}{Table \ref{3dtableD}: Continued.} \\ \hline
Novikov algebra $M(\mu)$ & Algebra & Hom-Novikov products $M(\mu_\alpha)$ \\
& morphisms $M(\alpha)$ & \\ \hline
\endhead

\hline
\endfoot

$(D1) = \Done$ & $\Donea$ & $\DoneaM$ \\
& \quad $c \in \{0,1\}$ & \\
$(D2) = \Dtwo$ & $\Dtwoa$ & $\DtwoaM$ \\
$(D3) = \Dthree$ & $\Dthreea$ & $\DthreeaM$ \\
& \quad $a_1 \in \{0,1\}$ & \\
$(D4) = \Dfour$ & $\Dfoura$ & $\DfouraM$ \\
$(D5) = \Dfive$ & $\Dfivea$ & $\DfiveaM$ \\
$(D6) = \Dsix$ & $\Dsixa$ & $\DsixaM$ \\
\multicolumn{1}{c}{$\lambda \not= \frac{1}{2},1$} & & \\
& $\Dsixb$ & $\DsixbM$ \\
& \quad $\lambda = 0$ & \\
\end{longtable}
\end{center}


\begin{center}
\begin{longtable}{lll}
\caption{Algebra morphisms on the $3$-dimensional Novikov algebra of type $E$ and their associated Hom-Novikov products.}\label{3dtableE}\\

\hline Novikov algebra $M(\mu)$ & Algebra morphisms $M(\alpha)$ & Hom-Novikov products $M(\mu_\alpha)$ \\ \hline
\endfirsthead

\multicolumn{3}{c}{Table \ref{3dtableE}: Continued.} \\ \hline
Novikov algebra $M(\mu)$ & Algebra morphisms $M(\alpha)$ & Hom-Novikov products $M(\mu_\alpha)$ \\ \hline
\endhead

\hline
\endfoot

$(E1) = \Eone$ & $\Eonea$ & $\EoneaM$ \\
& \quad $c \in \{0,1\}$ & \\
& $\Eoneb$ & $\EonebM$ \\
& \quad $c \in \{0,1\}$ & \\
& $\Eonec$ & $\EonecM$ \\
& \quad $c \in \{0,1\}$ & \\
\end{longtable}
\end{center}

\normalsize

We end this paper with some concluding comments and observations.

\begin{enumerate}
\item
Some non-isomorphic Novikov algebras have isomorphic Hom-Novikov deformations with non-trivial Hom-Novikov products.  For example, the second type of algebra morphisms on $(B1)$ with $a=0$ coincides with the first type of algebra morphisms on $(B2)$ with $c=0$.  Moreover, their corresponding Hom-Novikov products have the same non-zero characteristic matrices.  Therefore, they are isomorphic as non-trivial Hom-Novikov algebras, even though the original Novikov algebras $(B1)$ and $(B2)$ are not isomorphic.  The same phenomenon also happens in dimension two.  For example, the $2$-dimensional Novikov algebras $(N1)$ (with $a_2 = b_1 = b_2 = 0$ in its algebra morphisms) and $(N2)$ (with $b=0$ in its algebra morphisms) can be deformed into isomorphic Hom-Novikov algebras that have non-trivial Hom-Novikov products.  There are several other such pairs in the tables above.
\item
Related to the previous observation, note that we have \emph{not} classified Hom-Novikov algebras in low dimensions.  Such a classification remains an open question.  Perhaps the methods in \cite{fg2,gohr} for the classification of Hom-associative algebras can be adapted to Hom-Novikov algebras.
\item
Certain Novikov algebras are closely related to the geometry of Lie groups \cite{bm3,bm6,perea}.  Is there a similar relationship between Hom-Novikov algebras and some twisting of Lie groups?
\end{enumerate}


\end{document}